\newtheorem{thm}{Theorem}[section] 
\newtheorem{pro}[thm]{Proposition} 
\newtheorem{lem}[thm]{Lemma} 
\newtheorem{cor}[thm]{Corollary} 
\theoremstyle{definition} 
\newtheorem{defn}[thm]{Definition} 
\theoremstyle{remark} 
\newtheorem{rem}[thm]{Remark}
\newtheorem{exa}[thm]{Example}
\newcommand{\CC}{\mathbb C}
\newcommand{\NN}{\mathbb N}
\newcommand{\QQ}{\mathbb Q}
\newcommand{\ZZ}{\mathbb Z}
\newcommand{\GG}{\mathbb G}
\newcommand{\Ocal}{\mathcal O}
\newcommand{\Kcal}{\mathcal K}
\newcommand{\Mcal}{\mathcal M}
\newcommand{\Mbar}{\ol{\Mcal}}
\newcommand{\Pical}{\mathcal Pic}
\newcommand{\Mbargl}{\Mbar_g{}^{\!\!\!\ell}}
\newcommand{\upl}{{}^{\!\!\!\ell}}
\newcommand{\ttt}{{\mathrm {tw}}}
\newcommand{\Aut}{\operatorname{Aut}}
\newcommand{\Jac}{\operatorname{Jac}}
\newcommand{\Sing}{\operatorname{Sing}}
\newcommand{\Gal}{\operatorname{Gal}}
\newcommand{\im}{\operatorname{im}}
\newcommand{\Pic}{\operatorname{Pic}}
\newcommand{\Hom}{\operatorname{Hom}}
\newcommand{\Spec}{\operatorname{Spec}}
\newcommand{\zer}{\protect\ul{0}}
\newcommand{\al}{\alpha}
\newcommand{\toto}{{\mathrm{tot}\,0}}
\newcommand{\ev}{{\mathrm{ev}}}
\newcommand{\la}{\lambda}
\providecommand{\abs}[1]{\lvert#1\rvert}
\def\pmmu{{\pmb \mu}}
\newcommand{\nero}{\operatorname{\text{{N\'er}}}}
\newcommand{\nerosta}{\operatorname{\text{\emph{N\'er}}}}
\def\ol{\overline}
\def\ul{\bf}
\def\wt{\widetilde}
\newcommand{\ner}{N\'eron\ }
\newcommand{\sta}{\mathsf}
\newcommand{\Frac}{{\operatorname{Frac}}}
\newcommand{\regu}{{\operatorname{reg}}}
\begin{document}

\title{N\'eron models of $\Pic^{\zer}$ via $\Pic^{\zer}$}

\author{Alessandro Chiodo}
%

\begin{abstract}
We provide a new description of the \ner model of the 
Jacobian  of a smooth curve $C_K$ 
with stable reduction $C_R$ on a discrete valuation ring $R$
with field of fractions $K$. Instead of the 
regular semistable model, our approach uses the regular twisted model,
a twisted curve in the sense of Abramovich and  Vistoli 
whose Picard functor contains a larger separated 
subgroup than the usual Picard functor of 
$C_R$. In this way, after extracting a suitable 
$\ell$th root 
from the uniformizer of $R$,
%
%
the pullback of the \ner model of the Jacobian represents a 
Picard functor $\Pic^{\zer,\ell}$ 
of line bundles of degree zero on all irreducible components of 
a twisted curve. Over $R$, the group scheme $\Pic^{\zer,\ell}$ 
descends to the \ner model 
yielding a new geometric interpretation of its points and 
new combinatorial interpretations of the connected components of its 
special fibre. Furthermore, by construction, $\Pic^{\zer,\ell}$ is represented by a 
universal group scheme ${\mathcal{P}ic}^{\zer,\ell}_{g}$ of line bundles of degree zero
over a smooth compactification $\overline{\Mcal}_{g}\upl$ of $\Mcal_g$ 
where all \ner models of smoothings of stable curves 
are cast together after base change. 
\end{abstract}

\maketitle
\setcounter{tocdepth}{1}
{
\tableofcontents
} 

\section{Introduction}\label{sect:intro}
The N\'eron model of the Jacobian \cite{Ne, Ra}
is fundamental in the theory of
semi-stable reduction of curves 
and plays a crucial role in the study of 
compactified Jacobians. 
Indeed the Jacobian of a smooth curve over the field of fractions $K$ 
of a  discrete valuation 
ring $R$ 
 is a proper variety $\Pic^{\zer}C_K$, 
naturally equipped with the structure of an Abelian variety. 
In broad terms, the problem of compactified Jacobians aims  at finding  
a proper model for $\Pic^{\zer}C_K$. 
In general, one cannot expect a proper model which shares 
the same group properties as $\Pic^{\zer}C_K$; however, by relaxing the 
condition of properness and concentrating on the smoothness, A.~N\'eron 
discovered during 
1961--1963 
that a canonical $R$-model exists for any abelian variety $A_K$; 
this is the  N\'eron model $\nero(A_K)$. The case of the Jacobian was
completely elucidated in the late sixties by M.~Raynaud \cite{Ra} to whom 
we also owe the translation of 
N\'eron's construction in the language of schemes. 

The geometry of $\nero(\Pic^{\zer}C_K)$ is rich and 
interesting in its own right. For instance, whenever $C_K$ admits a semi-stable reduction on 
$R$, 
the group of components of the special 
fibre is the critical group $\mathcal  K(\Gamma)$ of 
the dual graph $\Gamma$ of the special fibre of the semi-stable
reduction. The critical group is the group governing sandpile 
dynamics discovered numerous times 
in different areas \cite{BHN, Biggs, Dahr, Lo1} and probably 
making their first appearance precisely in this context,  Raynaud
\cite[Prop.~8.1.2]{Ra}. We recall the main definitions in Section \ref{sect:compgroup}.

Furthermore, $\nero(\Pic^{\zer}C_K)$
sheds new light on 
the geometry of  compactified Jacobians, \emph{e.g.} \cite{Andreatta, 
Caponero,
Esteves, MV}.
We will assume for simplicity that $R$ 
is complete  with an algebraically closed residue 
field $k$. 
There is a common feature to 
well-behaved compactifications of 
the Jacobian $\Pic^{\zer}C_K$: a 
stratification of the special fibre indicating 
that  N\'eron models should be 
regarded as building blocks of the compactified Jacobians. 
Indeed, following \cite{Caponero} or \cite{Esteves} and \cite{MV},
if we admit that the curve $C_K$ has  
stable reduction $C_R$ over $R$, 
we can provide $\Pic^{\zer}C_K$ 
with a proper model $\ol J$ of $\Pic^{\zer}C_K$ 
whose special fibre $\ol J_C$ admits a stratification 
\begin{equation}\label{eq:strat}
\ol J_C= \bigsqcup_{\substack{{{S\subseteq \Sing_C}}\\
{C^{S} \text{ connected}}}}J_{C^{S}},\end{equation}
where $C$ is the stable curve, special fibre of $C_R$,
and the union runs over all possible 
sets of nodes $S\subseteq 
\Sing_C$ whose corresponding normalization $C^{S}\to C$
is connected. 
The spaces $J_{C^S}$ have co-dimension $\#S$, 
satisfy $\ol {J_{C^S}}\supseteq J_{C^{S'}}$ for $S\subseteq S'$, and 
can be identified with special fibres of 
N\'eron models of Jacobians of curves; namely 
we have 
$$J_{C^S}\cong \nero(\Pic^{\zer} C_K^S)_k.$$
where $C_K^S$ is the generic fibre of any 
smoothing of $C^S$ along $\Spec R$ and $k=R/(\pi)$ is the residue field.
The points of the compactification can be regarded as parametrising
isomorphism classes of 
line bundles 
on nodal curves; shortly in
this introduction we will illustrate the approach of Caporaso 
to which we return in \S\ref{sect:comparisons}.

Notice that $J_{C^S}\cong \nero(\Pic^{\zer} C_K^S)_k$
 is only an identification of schemes (the 
algebraic structure of \ner models is not involved)
and the right hand side of the 
isomorphism above is, 
as a scheme, merely the disconnected union 
of $c$ copies of $\Pic^{\zer} C^S$ where $c=\#\Kcal(\Gamma_S)$ is the complexity of the 
dual graph $\Gamma_S$ of $C^S$.
However, this decomposition 
arises the following natural question: 
is there a 
universal family, 
possibly equipped with a group structure, over a 
compactification of $\mathcal M_g$  
whose fibre over each isomorphism class $[C]$ is 
isomorphic to $J_C$?

\medskip 

Caporaso's approach yields the positive answer
to the analogue question where,  instead of the Jacobian,
we start from  the stack ${\mathcal Pic}^d_g$ representing the relative 
Picard functor 
$\Pic^d$ over $\mathcal M_g$ for $\gcd(2g-2,d-g+1)=1$ and $g\ge 3$ (in particular for
 $d\neq 0$). 
Indeed for these degrees Caporaso constructs a compactification 
over $\Mbar_g$ which satisfies 
the decomposition \eqref{eq:strat} after restriction to a curve over $R$.
For the functor 
$\Pic^{\zer}$ the approach can only be used in presence of 
a distinguished marking (\emph{e.g} either locally over $R$ or globally 
over $\Mbar_{g,n>1}$
where the marking labelled by $1$ allows a 
trivialisation of $\Pic^{d}C_K$ 
as a $\Pic^{\zer}C_K$-torsor and the use of the  result 
in degree-$d$). See further discussion in Section \ref{sect:comparisons}.
%

\smallskip 

When it comes to assembling 
the special fibres of N\'eron models \emph{into a group scheme}, 
it should be noticed that Caporaso's approach 
can never admit a group structure compatible with that of the Jacobian 
over $\Mcal_{g}$. 
Within the theory of nodal curves, 
one should rather refer to the group scheme introduced by
Holmes \cite{Holmes} which is fibred over a 
stack $\wt{\mathcal M}_g$ lying over $\ol {\mathcal M}_g$
and in general not quasi-comapact over it. 
This stack, which is only locally of finite presentation, 
has the striking advantage of satisfying the 
universal \ner mapping property on a higher dimensional basis. 
The
stack $\wt{\mathcal M}_g$ 
is a sort of completion of $\mathcal M_g$ because it satisfies the 
valuative criterion of properness for traits of $\ol {\mathcal M}_g$ 
whose generic points maps to $\mathcal M_g$.
We refer to Section \ref{sect:comparisons} for further discussion.

\medskip 

In this paper, we show that we can indeed assemble    
the special fibres of the \ner models of all Jacobians once we hit upon the 
right Picard functor over the right moduli space of curves. 
We describe a universal group scheme over  
a slight variation of Deligne and Mumford's 
compactification of $\Mcal_g$: the proper moduli stack $\Mbargl$ of 
of $\ell$-stable twisted curves, \emph{i.e.} 
stack-theoretic 
nodal curves with 
stabilisers of order $\ell$ at all nodes. This compactification  
is equipped with a stack $\mathcal Pic^{\zer}_g$ 
representing the relative Picard functor $\Pic^{\zer}$. Within 
$\mathcal Pic^{\zer}_g$ 
the stack $\mathcal Pic^{\zer,\ell}_g$ is locally closed and 
represents line bundles, invariant with respect to the so-called \emph{ghost} automorphisms 
fixing all geometric points and operating nontrivially as $\zeta\cdot (x,y)=(\zeta x, y)$ 
at all nodes $\{xy=0\}$ for $\zeta\in \pmmu_\ell$ (see \eqref{eq:ghostinvariantpic}).
Below, 
$R_\ell$ is the discrete valuation ring 
extracting an $\ell$th root from the uniformizer $\pi$: we set
$R_\ell=R[\wt \pi]/(\wt \pi^\ell=\pi)$. 

\medskip
\noindent\textbf{Theorem.} 
\emph{The stack $\mathcal Pic^{\zer,\ell}_g$ is a 
group scheme over  $\Mbar{}^{\,\ell}_g$ and is  
a separated model of the universal Jacobian representing 
the relative Picard functor $\Pic^{\zer}$ over $\mathcal M_g$. }

\emph{Furthermore, assume that $\ell$ is a multiple of the exponent 
of the critical 
group $\mathcal K(\Gamma)$ of any  stable  graph $\Gamma$ of genus $g$. 
Then, for any  {trait} $\Spec R\to \Mbar_g$ transversal to 
the boundary $\Mbar_g\setminus \mathcal M_g$, there exists a 
lift $\Spec R_\ell 
\to \Mbar_g{}^{\!\!\!\ell}$ 
such that 
$\Pic^{\zer,\ell}_{R_\ell}=\Pic^{\zer,\ell}_g\otimes R_\ell$
descends to $\Pic^{\zer,\ell}_R$ on $R$ and the \ner model of 
$\Pic^{\zer,\ell}_K=\Pic^{\zer,\ell}_g\otimes K$
satisfies} 
\begin{equation}\label{eq:NerviaPic}
\nero (\Pic^{\zer,\ell}_K) =\Pic^{\zer,\ell}_R.
\end{equation}

\begin{rem}
We refer to Corollary \ref{thm:global} describing explicitly the lift $\Spec R_\ell 
\to \Mbar_g{}^{\!\!\!\ell}$.
\end{rem}

\begin{rem}\label{rem:otherell}
The statement \eqref{eq:NerviaPic} 
holds for a fixed  {trait} $\Spec R\to \Mbar_g$
if and only if $\ell$ is a multiple of the exponent of 
the critical group $\mathcal K(\Gamma_k)$, where
$\Gamma_k$ is the dual graph of the curve identified by the special 
closed point $\Spec k\to \Mbar_g$ in $\Spec R$. 
 
Clearly, the statement \eqref{eq:NerviaPic} 
holds for any  {trait} $\Spec R\to \Mbar_g$
if $\ell$ is a multiple of the complexity $c(\Gamma)$ 
of all stable graphs $\Gamma$ of genus $g$ (this follows immediately from 
$\#\mathcal K(\Gamma)=c(\Gamma)$).

In Corollary \ref{cor:anothertake},
we can relax the transversality condition and consider a trait 
whose generic point still lies in $\mathcal M_g$ and 
whose strict Henselization at the special point $\Spec k \to [C]\in 
\ol {\mathcal M}_g$
$$\Spec R \to [\mathrm {Def}(C)/\Aut(C)]$$
 maps to $\pi^{t_n}$  the 
local parameter $f$ defining the Cartier 
divisor $D=(f)\subset \mathrm {Def}(C)$ 
parametrising curves where the node $n\in C$ 
persists.
Then $\ell$ is the exponent of the critical group 
of the $\bf t$-weighted graph $\mathcal K_{\bf t}(\Gamma_C)$
where each edge is decorated with the corresponding index $t_n$, see
 Section \ref{sect:compgroup} (the possibility of extending 
this formalism in the presence of weighted edges
is alluded to in \cite[\S9, Rem.~2]{BHN}). 

These considerations led us to a closer study of critical groups. We refer to Section \ref{sect:compgroup}, where we revisit the theory of critical groups, Abel theorems, 
Kirkhhoff matrix-tree theorem, and complexity in the 
context of graph with weights assigned to each edge. Some formulae simplify 
computations of the standard invariants, see \eqref{eq:retombee} and
Theorem \ref{thm:Kirchhoff}.
\end{rem}

\bigskip

We can now step back to $\Spec R$ and
compare the classical presentation of the N\'eron model of the Jacobian $\Pic^{\zer}(C_K)$
by Raynaud to the new one arising from \eqref{eq:NerviaPic}.

First, recall that, on $R$, the functor $\Pic^{\zer}$ of
line bundles of degree zero on all irreducible components, 
provides a 
separated model, but fails to satisfy the 
universal \ner  property.
The classical solution, provided  by Raynaud in 1970 \cite{Ra},   
is to enlarge $\Pic^{\zer}$ to a non-separated 
functor $\Pic^\toto$ of line bundles of total degree $0$
and to take the quotient by the closure $\ol E_K$ of the identity section 
$E_K$ over $K$. Summarizing, if we write $k$ for the residue field 
and $C_k$ for the special fibre we have 
\begin{equation}\label{eq:Raynaud}\nero(\Pic^{\zer}_K)=\Pic^\toto(C_R/R)/\ol E_K.\end{equation}
The passage to the quotient by $\ol E_K$ clearly prevents us
from defining a modular functor over $\Mbar_{g}$ 
starting from $\Pic^\toto$ and $E_K$; indeed,
the group structure defining $E_K$ 
depends on the chosen smoothing. 
Interestingly, in 2014, Holmes \cite{Holmes} 
shows  how $\ol E_K$ fails to be flat 
when we work of the moduli space of  
universal deformations of curves. As Holmes further illustrates, 
the problem persists on 
essentially any proper modification of the base space.

In this paper, we get back to $\Pic^{\zer}$, pointing out that 
 it becomes sufficiently large to 
satisfy the \ner  property 
when we consider a reduction over $R_\ell$ given by a twisted curve, \emph{i.e.} 
when we allow line bundles on stack-theoretic nodes with stabilizer $\pmmu_\ell$ 
of a twisted curve $\sta C_k$ over $C_k$.
Then \eqref{eq:NerviaPic} may be rephrased as saying that 
the points of the special fibre of the \ner
model are all represented by line bundles of degree zero
on every irreducible components. According to \eqref{eq:Raynaud}, 
the group of components of the special fibre 
is usually phrased in terms of multi-degrees of line bundles on reducible curves and 
coincides with the determinant group of 
the lattice of integral cuts (functions on 
the vertex set). Instead, in \eqref{eq:NerviaPic}, we have 
$$\nero(\Pic^{\zer}C_K)_k=\left(\Pic^{\zer}\sta C_k\right)^{\pmmu_\ell}$$ 
and the connected 
components correspond to characters at each node; this is  naturally
related to the determinant group of the lattice of integral flows
(functions on the edge set). As a byproduct 
we get a new version of the so-called Abel theorem for graphs,
see \S\ref{sect:Abel}.

We can now 
rewrite the above decomposition of the special fibre of the compactified 
Jacobian as a union of Picard groups
$$\ol J_{C}=\bigsqcup_{\substack{{S\subseteq C^{\mathrm {sing}}}\\
{C^{S} \text{ connected}}}} \left(\Pic^{\zer} \sta C^S\right)^{\pmmu_\ell},$$
where $\sta C^S$ is the twisted curve coarsely represented by $C^S$ 
with stabilisers of order $\ell$ over 
the nodes $C^S$ and 
$\ell$ is a multiple of all complexities of the curves $C^S$ (we can take 
for instance
$\ell=c(\Gamma)!$).

\subsection*{Structure of the paper} 
After setting up things in Section \ref{sect:setup},
we prove the above statement \eqref{eq:NerviaPic} and some variants 
in Section \ref{sect:Nermod}.
Then in Section \ref{sect:compgroup} we develop the combinatorial consequences 
and in Section \ref{sect:nerglobal} we prove  the theorem stated above.

\subsection*{Acknowledgements} I am are gratuful to 
Lucia Caporaso, 
Eduardo Esteves, David Holmes, Johannes Nicaise, C\'edric Pepin, 
Michel Rayanud, Matthieu Romagny, Angelo Vistoli, 
Filippo Viviani for many useful conversations. 
A special thank goes to Andr\'e Hirschowitz, who 
first encouraged me in this research direction. 



%

\section{Curves and  Picard functors}\label{sect:setup}
\subsection{Assumptions and notations}
Unless otherwise specified, we work with schemes locally of finite type over an algebraically closed field $k$. $R$ denotes a  
complete discrete valuation ring with 
algebraically closed residue field $k$ and field of fractions $K$.
We denote by $\pi$ the uniformizer of $R$.  
For any $R$-scheme 
$T_R$, we write $T_K$ for its generic fibre and $T_k$ for its special fibre.
In general for a scheme $U\to X$ and for any $X$-scheme 
$S\to X$ we write $U_S\to S$ for the corresponding base change. 

We often use strict Henselizations in order to describe
a scheme or a 
morphism between schemes locally at a closed point:
by ``local picture of $X$ at $x$'' we mean the 
strict Henselization of $X$ at $x$. 
We systematically need to extract $\ell$th roots of unity from the 
uniformizer and from the local parameters at the branches of the 
nodes we consider. 
In these cases we will need to assume that the residual characteristic 
is prime to $\ell$ and we will write $$R_\ell=R[\wt\pi]/(\wt\pi^\ell=\pi).$$
In Theorems \ref{thm:local1}, \ref{thm:local2} and \ref{thm:global},
this will force us to assume that
 $\mathrm {char}\, k$ is prime to the 
value of $\ell$ specified in the statement. 

Whenever we work with Deligne--Mumford stacks $\sta X$ we rely on 
the existence of a coarse space $X$ and a 
morphism $\sta p_{\sta X}\colon \sta X\to X$ universal with respect 
to morphisms to algebraic spaces. This allows us to associate to any 
morphism between Deligne--Mumford stacks $\sta f\colon \sta X\to \sta Y$
a morphism between algebraic spaces $f\colon X\to Y$. 

\subsection{Curves, twisted curves and families of curves}
A curve $C_k$ over $k$ always denotes a reduced, 
 connected, proper, one-dimensional
scheme over $k$ whose only singularities are nodes. 
We refer to $h^1(C_k, \Ocal_{C_k})$ as the genus $g=g(C_k)$ of $C_k$. 

\subsubsection{Dual graph} The dual graph $\Gamma$ of $C_k$ is a 
connected nonoriented graph, possibly
containing multiple edges (edges linking the same two vertices) and
loops (edges starting and ending at the same
vertex). Consider the normalisation 
$\mathrm{nor}\colon C_k^{\mathrm{nor}} \to C_k$
and the connected components of $C_k^{\mathrm{nor}}$.
The vertex set $V$ of the dual graph of $C_k$ is the set of connected components
$C_v$ of the normalisation, 
it coincides with the set of irreducible components of $C_k$. 
The edge set is the set of the nodes of $C_k$. 
A node identifies the
connected components of ${C}_k^\mathrm {nor}$ where its preimages lie,
in this way an edge links two (possibly equal) vertices.

\subsubsection{Families of curves}
A family of curves $C_S\to S$ is a proper and flat morphism whose fibres are 
curves. We consider a family of curves $C_R$ over a \emph{trait},  
the spectrum of a discrete valuation ring $R$.  
We assume that $C_K$ is smooth.
 Then, the 
local picture at a node $n$ of the special fibre is 
$\Spec R[x,y]/(xy=\pi^{t_n})$ where
$\pi$ is a uniformizer of $R$ and $t_n$ is a positive integer. 
We refer to 
$t_n$ as the \emph{thickness} of the node. 
Note that, if $t_n=1$ at all nodes, then 
$C_R$ is regular. Geometrically, we can regard the family as a morphism 
$\Spec R\to \Mbar_g$; the condition $t_e=1\ \forall e$ is a 
condition of transversality to 
the boundary $\partial \Mbar_g=\Mbar_g\setminus \Mcal_g$.

In this way, the special fibre of $C_R\to R$ yields a 
decorated graph
equipped with a function $V\to \NN$, $v\mapsto g_v:=g(C_v)$
and a function $E\to \ZZ_{\ge 1}$, $e\mapsto t_e$, 
where $t_e$ is the thickness of the node corresponding to $e$. 
We read off 
the decorated graph the genus $g=\sum_v g_v+b(\Gamma)$, where 
$b(\Gamma)$ is the first Betti number $1-|V|+|E|$ of 
the dual graph $\Gamma$.

\subsubsection{Twisted curves} A twisted curve $\sta C_k$ (in the sense of Abramovich and Vistoli) is a Deligne--Mumford stack whose coarse scheme $C_k$ is a curve.
We only consider the case where the smooth locus is represented by a scheme (we 
do not allow non-trivial stabilisers on smooth points).
At the nodes the local picture is $[(\Spec k[x,y]/(xy))/\pmmu_r]$ with 
$\zeta\in \pmmu_r$ operating as $\zeta(x,y)=(\zeta x,\zeta^{-1}y)$. The notion 
of family and of dual graph generalises word for word for a twisted curve. 
We only consider twisted curves whose coarse space is a stable curve. 

\subsubsection{The regular {{twisted}}  model}
We consider a stable curve $C_R\to \Spec R$ with $C_K$ smooth; it 
may be regarded as a stable reduction
of $C_K$ over $R$. 
Take an \'etale neighbourhood of a node $p\in C_k$ of thickness $t$:
the local picture is $\underline{U}=\Spec R[x,y]/(xy=\pi ^t)$.
Consider the quotient
stack $[U/\pmmu_t]$ where $U=\Spec R[z,w]/(zw=\pi)$ and 
$\zeta\in \pmmu_{t}$
acts as $(z,w)\mapsto(\zeta z,\zeta^{-1}w)$
The morphism $[U/\pmmu_t]\to \underline U, x\mapsto z^t, y\mapsto w^t$ is invertible away from the origin
$$\sta p = [(z=w=0)/\pmmu_t]\longrightarrow  p=(x=y=0)\in \underline U.$$
We define a twisted curve 
by glueing $[U/\pmmu_t]$ to
$C_R\setminus \{p\}$ along the isomorphism
$[U/\pmmu_t]\setminus \{\sta p\}\to \underline U\setminus\{p\}$. We repeat this procedure at all nodes of the special fibre $C_k$ and we get $\sta C^\ttt$
over $R$.
Note that $\sta C^\ttt$ is regular (as a stack over $k$).

\begin{defn}
We refer to $\sta C^\ttt\to \Spec R$ 
as the regular {{twisted}} model  associated 
to a stable reduction $C_R$ over $R$ 
of a smooth curve $C_K$ over $K$.
\end{defn}

\begin{rem}
In this context, 
the regular {{twisted}} model plays a similar role than the 
regular minimal semistable model. In particular it is 
not stable with respect to base change; 
we have chosen the notation $\sta C^\ttt$ (and we will avoid 
a notation of the form ``$\sta C_R$'') 
precisely because the construction of this model over $R'$ after base 
change of $C_R$ via an extension
$R\subseteq R'$ is not a simple base change.

We illustrate it by observing how the 
regular {{twisted}} model behaves when we pullback all data to 
the discrete valuation ring $R_\ell$ obtained by extracting an 
$\ell$th root $\pi'$ from the uniformizer $\pi\in R$.
Then the stable curve $C_{R_\ell}$, pullback of $C_R$ to $R_\ell$, is 
a stable reduction of the curve  $C_K$ pulled back 
to the field of fraction $K_\ell=\Frac (R_\ell)$.
However, the  regular {{twisted}} model associated to the stable 
reduction $C_{R_\ell}$ of $C_{K_\ell}$ is not the 
pullback of the regular {{twisted}}
model of the stable reduction $C_R$ of $C_K$. Whereas the respective
coarse spaces are 
indeed related by a simple pullback, the stacks differ: where the pullback of 
$\sta C^\ttt$ to $R_\ell$ has a stabiliser of order $t$, the 
regular {{twisted}} model of $C_{R_\ell}$ has a stabiliser of order $\ell t$.
For $\ell=1,2,\dots$ we write $\sta C^\ttt=\sta C^\ttt(1), \sta C^\ttt(2),\dots$
for these regular {{twisted}} models and we stress that the twisted curve  
$$\sta C^\ttt(\ell)\longrightarrow \Spec R_\ell$$
depends on the parameter $\ell\ge 1$ and 
is not the pullback of $\sta C^\ttt$ from 
$R$ to $R_\ell$.
\end{rem}

\subsection{The relative Picard functors}
For any stack $\sta X$ over  $k$,
we denote by $\sta {LB}({\sta X})$ the category 
of line bundles on $\sta X$ and by 
$\Pic(\sta X)$ the 
group of isomorphism classes of line bundles on $\sta X$
$$\Pic(\sta X)=H^1(\sta X,\Ocal^\times).$$

\subsubsection{Multi-degrees}
Notice that, for $\sta X=C_k$ 
a stable curve or --- more generally --- for $\sta X=\sta C_k$, 
we can decompose $\sta{LB}({\sta X})$ 
and $\Pic(\sta X)$ into 
sub-loci with fixed degree on all irreducible components. 
In the case of $\sta C_k$ we get a multi-degree whose entries are 
rational numbers (and indeed elements of 
$\frac1n\ZZ$ where $n$ is the lowest common multiple of 
the orders of the stabilisers of the points of $\sta C_k$).

\subsubsection{Relative functors}
For a family of curves 
$\sta C\to B$ there exist relative versions of these notions. 
We consider the 
fibred category $\sta {LB}({\sta C/B})$ whose objects are 
$(S,\sta L)$ formed by 
$B$-schemes $S\to B$ paired with a line bundle $\sta L$ on $\sta C_S$. 
This is represented by an Artin stack, \cite[Lem.~2.3.1]{Lieblich}.
Furthermore, for any object $\tau=(S,\sta L)$
of $\sta{LB}({\sta X/B})$
there is an embedding $\mathbb G_m(S)\hookrightarrow \Aut_S(\tau)$ compatible 
with pullbacks. Then, by \cite[Thm.~5.1.5]{ACV}
and \cite[I.~Prop.~3.0.2, (2)]{Ro} 
there exists a stack $\sta {LB}({\sta C/B})\!\!\fatslash \mathbb{G}_m$
which coincides on each fibre with the Picard group. 
We take this as a definition for the 
relative Picard functor 
$$\Pic({\sta C/B})=\sta {LB}({\sta C/B})\!\!\fatslash \mathbb{G}_m.$$
In the case of a family of twisted curves $\sta C\to B$, 
each point of the
stack $\sta {LB}({\sta C/B})\!\!\fatslash\mathbb{G}_m$
has trivial automorphism group. Therefore $\Pic({\sta C/B})$ 
is represented by a group scheme. When we work with $C_R\to \Spec R$,
 $C_K\to \Spec K$ and $C_k\to \Spec k$ (as we usually do in this paper),
 we get three group schemes which we denote by $\Pic_R, \Pic_K$ and $\Pic_k$.
 
\begin{defn}
Within the relative Picard functor of any twisted curve $\sta X$ 
over any base scheme $B$
 we can identify the following remarkable sub-functors
by imposing the following conditions to 
the restrictions of line bundles to the  fibres of $\sta X$
\begin{align*}
\Pic^{\zer} &=\text{degree zero  on 
all irreducible components of each fibre $\sta X_b$ with $b\in B$};\\
\Pic^{\mathrm{tot}\, d} & =\text{total degree $d$ on 
all irreducible components of each fibre $\sta X_b$ with $b\in B$}.
\end{align*}
\end{defn}


\subsubsection{The separated sub-group $\Pic^{\zer}$ 
of the Picard group}
When $\sta C_R\to \Spec R$ is a twisted curve 
the 
group scheme $\Pic_{R}:=\Pic(C_R/R)$
contains the above mentioned sub-group scheme
 $\Pic^{\zer}_R:=\Pic^{\zer}(C_R/R)$ representing line bundles of degree zero on every
 irreducible component of the fibres. 
\begin{pro}\label{pro:separated}
Let $\sta C_R$ be the regular {{twisted}} model of a stable 
curve $C_R$ over $R$. Then, 
$\Pic^{\zer}({\sta C_R/R})$ is a separated group scheme. 
\end{pro}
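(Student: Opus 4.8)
The plan is to verify the valuative criterion of separatedness. Since $\Pic^{\zer}(\sta C_R/R)$ is already a group scheme over the trait $\Spec R$, separatedness is equivalent to the identity section being a closed immersion, which by the group law and the valuative criterion amounts to showing the following: every line bundle $\sta L$ on $\sta C_R$ lying in $\Pic^{\zer}$ (degree zero on every irreducible component of each fibre) whose restriction $\sta L_K$ to the generic fibre $\sta C_K$ is trivial must itself be trivial on $\sta C_R$. Here I use that $R$ is strictly Henselian, so $\Pic(R)=0$ and there is no Brauer obstruction; sections of the Picard functor over $R$ are therefore represented by honest line bundles on $\sta C_R$, and no loss is incurred by arguing with $\sta L$ directly.

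First I would exploit the defining feature of the regular twisted model, namely that $\sta C_R$ is regular as a stack. Regularity lets me trivialise $\sta L$ on $\sta C_K$, extend the trivialising section to a rational section of $\sta L$ over all of $\sta C_R$, and read off an isomorphism $\sta L\cong\Ocal_{\sta C_R}(D)$ where $D$ is vertical, i.e.\ a $\ZZ$-combination $D=\sum_{v\in V}a_v\,\sta C_v$ of the irreducible components $\sta C_v$ of the special fibre $\sta C_k$. This step rests on the fact that on a regular two-dimensional Deligne--Mumford stack Weil and Cartier divisors agree and that a line bundle trivial on the generic fibre is supported, as a divisor class, on the special fibre.

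The second ingredient is the precise local shape of the model. In the chart $[U/\pmmu_t]$ with $U=\Spec R[z,w]/(zw=\pi)$ one has $\pi=zw$, so the special fibre is cut out with multiplicity one along each branch; globally $\sta C_k=\operatorname{div}(\pi)=\sum_v\sta C_v$ is reduced and principal, whence $\Ocal_{\sta C_R}(\sum_v\sta C_v)\cong\Ocal_{\sta C_R}$. Thus the class of $\sta L$ depends only on $(a_v)$ modulo $\ZZ\cdot(1,\dots,1)$. The condition $\sta L\in\Pic^{\zer}$ then reads $\sum_v a_v\,(\sta C_v\cdot\sta C_w)=0$ for every $w$, where the orbifold intersection number $\sta C_v\cdot\sta C_w=\deg_{\sta C_w}\Ocal(\sta C_v)\in\tfrac1n\ZZ$ is computed on the regular stack: off the diagonal it equals $\sum 1/t_e$ over the edges $e$ joining $v$ and $w$, while numerical triviality of the whole fibre gives $\sum_v(\sta C_v\cdot\sta C_w)=0$ for each $w$.

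It remains to identify the integral kernel of the intersection matrix $\sta M=(\sta C_v\cdot\sta C_w)_{v,w}$, and this is where the combinatorics of the dual graph enters. The matrix $\sta M$ is minus a weighted Laplacian of the connected dual graph $\Gamma$ of $C_k$, with positive edge weights $1/t_e$; it is symmetric and negative semidefinite, annihilates $(1,\dots,1)$, and, because $\Gamma$ is connected, has rank $|V|-1$. Hence $\ker\sta M=\QQ\cdot(1,\dots,1)$ and $\ker\sta M\cap\ZZ^V=\ZZ\cdot(1,\dots,1)$. Therefore $(a_v)$ is a constant vector, $D=c\sum_v\sta C_v=c\,\operatorname{div}(\pi)$ is principal, and $\sta L\cong\Ocal_{\sta C_R}$, as required. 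I expect the only delicate point to be the bookkeeping of orbifold degrees at the $\pmmu_t$-nodes, that is, checking that the degree-zero condition really translates into the vanishing of $\sta M\cdot(a_v)$ with the rational intersection numbers above; once the regularity of $\sta C_R$ supplies the divisor description, the connectedness argument for $\ker\sta M$ proceeds exactly as in the classical case and poses no further obstacle.
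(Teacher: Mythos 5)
Your proposal is correct and takes essentially the same route as the paper, which follows \cite[Lem.~3.4,(i)]{Caponero}: regularity of the twisted model is used to write a line bundle of $\Pic^{\zer}$ that is trivial on the generic fibre as $\Ocal(D)$ with $D$ a vertical divisor, and the degree-zero condition on all components then forces $D$ to be an integer multiple of the reduced, principal special fibre, hence trivial. The only divergence is the final combinatorial step: the paper runs the minimum-coefficient/connectedness argument on the components (via $\deg \sta L|_{D_m}\ge \sum_{n>m}D_n\cdot D_m\ge 0$), whereas you identify the integral kernel of the orbifold intersection matrix as $\ZZ\cdot(1,\dots,1)$ using negative semidefiniteness of the weighted Laplacian --- two standard, equivalent ways of finishing the same Zariski-type lemma.
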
 
\begin{proof} The proof is the same, word for word, as in 
\cite[Lem.~3.4,(i)]{Caponero}. We assume that $\sta L$ is a line bundle on 
$\sta C_R$ extending 
$\Ocal$ on $C_K$ with all 
degrees equal to $0$ on all components of $\sta C_k$. 
Since $\sta C_R$ is a regular Deligne--Mumford stack 
we present it in terms of a divisor supported on the special fibre. 
This can be written as $\sum_{n\in \ZZ} nD_n$ with $\cup D_n=\sta C_k$
and two distinct $D_{n'}$ and $D_{n''}$ overlapping 
only at a set of nodes.
The claim is that for $m=\min \{n\mid D_n\neq \varnothing\}$ 
we have $D_m=\sta C_k$. This claim is enough to 
prove that $\sta L$ is a pullback from 
$\Spec R$, \emph{i.e.} it represents 
the identity section of $\Pic({\sta C_R/R})$.
Indeed, this claim follows from computing 
$\deg \sta L\mid_{D_m}\ge \sum_{n>m} D_n\cdot D_m\ge 0$. Notice that 
$\sta L$ has degree zero when restricted to $D_n$; we get $\sum_{n>m} D_n\cdot D_m=0$,
which
means $D_n=\varnothing$ all $n>m$ as desired. 
\end{proof}

\subsubsection{The ghost action on 
$\Pic^0({\sta C^\ttt(\ell)/R_\ell})$}\label{sect:ghosto}
The regular {{twisted}} model $\sta C^{\ttt}(\ell)$ 
attached to $C_{R_\ell}$ is equipped with a 
$\pmmu_\ell$-action operating on the base ring $R_\ell$
by multiplication on $\wt \pi$ 
and, equivariantly, on $\sta C^{\ttt}(\ell)$. 
We consider the local picture at 
a node  $[U/\pmmu_{\ell t}]$ with 
$U=\Spec R_\ell [z,w]/(zw=\wt \pi)$ and 
$\zeta\in \pmmu_{\ell t}$ operating as 
$\zeta\cdot (z,w,\pi')=
(\zeta z,\zeta^{-1}w,\pi')$. 
The action of 
$\eta\in\pmmu_\ell\subseteq \pmmu_{\ell t}$ 
on $[U/\pmmu_{\ell t}]$ 
is $\eta\cdot (z,w,\pi')=(\eta z, w, \eta \pi')$. 
This action is $2$-isomorphic 
to $(z, \eta w, \eta \pi')$; so, no coordinate has been privileged 
(the $2$-isomorphism is realised for instance 
by the natural transformation 
$(z,w,\pi')\mapsto (\zeta^{-t}z,\zeta^tw,\pi')$
 if $\zeta^t=\eta$).
This action restricts on the special fibre to an automorphism 
which fixes 
the coarse space of $\sta C_k$; therefore, it is usually referred to 
as a ghost automorphism. 

We consider the $\pmmu_\ell$-action on the special fibre of 
$$\Pic^{\zer}({\sta C^{\ttt}(\ell)/R_\ell})
\longrightarrow \Spec R_\ell.$$
We describe the connected components of the special fibre and among
them we identify those who are formed by fixed points. In fact, 
their union 
is the fixed set, with respect to the $\pmmu_\ell$-action 
of the special fibre. Before, we need to develop the combinatorics
of the decorated graph $\Gamma$.

\subsection{The combinatorics of the graph $\Gamma$}
We consider the dual graph, its vertex set $V$ 
and its edge set $E$.
Let $\mathbb E$ be the double cover of $E$ formed by 
oriented edges. For any $e\in \mathbb E$, 
we write $\ol e$ for the oriented 
edge obtained from $e$ by reversing its orientation.
For $e\in \mathbb E$ we write $e_+$ and $e_-$ for
its tip and its tail in $V$. 
We recall that $\Gamma$ is decorated by the genus function $v\mapsto g_v$
and by the thickness function $e\mapsto t_e$, it has genus 
$g=\sum_v g_v+b(\Gamma)$ and 
is stable if for any vertex $2g_v-2+n_v$ is positive. 

\subsubsection{The cochain differential}
For  any group $G$, 
$C^0(\Gamma;G)=\{f\colon V\to G\}$ 
denotes the group of $G$-valued $0$-cochains,
 $C^1(\Gamma,G)=\{g\colon \mathbb E\to G
\mid g(e)=-g(\ol e)\}$ the group of 
 $\QQ$-valued $1$-cochains, $\delta$ 
denotes the differential 
\begin{align*}\delta \colon C^0(\Gamma;G)\to C^1(\Gamma;G),
\qquad f\mapsto \large (e\mapsto f(e_+)-f(e_-)).
\end{align*}

\subsubsection{Pairings}
When the group  $G$ is equal to the field of rational numbers $\QQ$,
 we have the following perfect pairings which 
depend on the thickenesses 
$$\langle f_1,f_2\rangle_0 = \sum_{v\in V} f_1(v)f_2(v)
\qquad \text{and} \qquad \langle g_1,g_2\rangle_0 =\frac 12
\sum_{e\in \mathbb E} \frac1{t_e} (g_1(e)g_2(e)).$$
The adjoint of $\delta$ with respect to the above bilinear 
forms is the  differential 
\begin{align*}\partial^{\bf t} \colon C^1(\Gamma;\QQ)\to C^0(\Gamma;\QQ),\qquad 
g\mapsto \left (v\mapsto \frac{1}2 \sum_
{\substack{{e\in \mathbb E}\\ {e_+=v}}} \frac{g(e)}{t_e}\right).\end{align*}
We can make the above definition explicit by choosing an orientation 
of each  edge, \emph{i.e.} a lift $E\to \mathbb E$ 
(we systematically declare when this choice is made). 
In this way $C^1(\Gamma;\QQ)$ is simply the set of 
$\QQ$-valued functions on $E$.
Then, there is a distinguished basis of $C^0$ and of $C^1$
formed by characteristic functions $\chi_v\colon v'\to \delta_{v,v'}$
and $\chi_e\colon e'\to \delta_{e,e'}$. 
Abusing the notation we identify $v$ to $\chi_v$ and $e$ to $\chi_e$
and we get 
\begin{equation}\label{eq:deltaonQ}\delta\colon \QQ^{|V|}\to \QQ^{|E|},\quad 
\delta v= \sum_{e_+=v} e-\sum_{e_-=v} e\end{equation}
and 
\begin{equation}\label{eq:partialonQ}
\partial^{\bf t}\colon \QQ^{|E|}\to \QQ^{|V|},\quad 
\partial e=\frac1{t_e}(e_+-e_-).\end{equation}

\subsubsection{Laplacian and Jacobian of the 
graph $\Gamma$ with thicknesses}
As mentioned in \cite[\S9, Rem.~2]{BHN}, there is no difficulty in extending 
the theory of the Laplacian 
in the present setup where the pairings depend on the 
thicknesses. We consider the Laplacian  
$\partial^{\bf t}\delta$ and, following the most 
widely used notation, we refer 
to the group 
$$\mathcal K_{\bf t}(\Gamma)=\frac{\partial^{\bf t}C^1(\Gamma;\ZZ)}
{\partial^{\bf t}\delta C^0(\Gamma;\ZZ)}$$
as the \emph{critical group}
of the graph $\Gamma$ decorated with the 
thicknesses $\bf t$.
This group is related---but not isomorphic in general---to 
the group of components of the special fibre of the N\'eron model. 
We refer to Proposition \ref{cor:local2}.

\subsubsection{Ordinary differentials}
When $G$ is the ring $\ZZ$, 
the bilinear forms still make sense and 
are non-degenerate. They  
induce a perfect pairing only if 
$\bf t=\bf1$. The corresponding differential $\partial^{\bf 1}$
matches the standard homology differential $\partial$ 
from $1$-chains to 
$0$-chains via the canonical identification $C_i=C^i$. 
By tensoring with any group $A$ we recover the 
differentials$$\partial_A\colon C^1(\Gamma;A)\to C^0(\Gamma;A)\quad \text{and}\quad
\delta_A\colon C^0(\Gamma;A)\to C^1(\Gamma;A)$$
between $A$-valued chains and cochains. Here we will 
use the above structure with $A=\mathbb G_m$ and $A=\QQ/\ZZ$.

\subsection{The special fibre of $\Pic^{\zer}$}
Below, in Proposition \ref{pro:pic0decomp}, we completely describe 
the connected components of the special fibre.

\subsubsection{The differential $\partial_{\bf t}$}
The  combinatorics introduced above specialises as follows. 
By restricting $\partial^{\bf t}$ from \eqref{eq:partialonQ} to $\ZZ$ we get 
$$\partial^{\bf t}\!\mid_\ZZ\colon C^1(\Gamma;\ZZ)\to C^0(\Gamma;\QQ).$$
Notice that $\partial^{\bf t}\!\!\mid_\ZZ$ 
maps the sub-group of $\ZZ$-valued 
$1$-cochains satisfying $g(e)\in t_e\ZZ$ to $\CC^0(\Gamma;\ZZ)$. 
By taking the quotient 
$C^1(\Gamma;\oplus_e \ZZ/t_e)=C^1(\Gamma;\ZZ)/C^1(\Gamma;\oplus_e (t_e))$   we get the reduced differential 
$$\partial_{\bf t}\colon
C^1(\Gamma;\oplus_e \ZZ/t_e) \longrightarrow 
C^0(\Gamma;\QQ/\ZZ).$$
After a choice of an orientation, $\partial_{\bf t}$ is simply 
\begin{align}\label{eq:partial_on_characters} \partial_{\bf t}\colon 
\bigoplus _{e\in E} \ZZ/t_e 
\longrightarrow (\QQ/\ZZ)^{|V|},
\qquad e\mapsto \frac{1}{t_e} (e_+-e_-).
\end{align}

\subsubsection{Characters at the nodes} 
Let us return to 
the regular {{twisted}} model $\sta C^{\ttt}(\ell)$.
Every choice of an orientation of an edge of the dual graph of 
the special fibre, allows us to 
choose a distinguished branch of the corresponding node. 
The fibre of a line bundle $\sta L$ over the node can be written 
for a unique choice of $a_e\in \ZZ/t_e$ as 
the $a_e$th tensor power of the line tangent 
along this distinguished branch. 
Note that $a_e(\sta L)$ changes sign if we change the orientation of $e$. 
In this way, to any line bundle $\sta L$ on 
the special fibre, we can attach an element ${\bf a}(\sta L)$ of 
the above $C^1(\Gamma;\oplus_e \ZZ/t_e)=
\bigoplus _{e\in E} \ZZ/t_e$.
We have the following definition.
\begin{defn}\label{defn:evaluationcharacters}
Consider a twisted curve $\sta X$ 
over a discrete valuation ring $R$.
We assume that the generic fibre over $K$ is smooth, 
that $\sta X$ is regular, and that the stabilisers at the nodes 
of the special fibre 
have order $t_e$. 

We have a morphism 
\begin{align}\label{eq:ev}
{\mathrm{ev}}\colon \Pic (\sta X/S)&\to C^1(\Gamma;\oplus_e \ZZ/t_e)\\
\nonumber\sta L&\mapsto {\bf a}(\sta L),
\end{align}
For any 
 ${\bf a}\in 
 C^1(\Gamma;\oplus_e \ZZ/t_e)$ 
we write $\Pic_{\bf a}({\sta X/S})$ and 
 for the sub-loci of 
line bundles mapping to $\sta a$.
\end{defn}

\subsubsection{The component group of 
the special fibre of $\Pic^{\zer}$}
For a twisted curve $\sta X$ 
over a discrete valuation ring $R$ 
with smooth generic fibres and stabilisers of order $t_e$ on 
the nodes of the special fibres 
the 
 connected components of the special fibre of 
 $\Pic^{\zer}
 (\sta X/R)$
 are classified by the \emph{kernel of characters} $\ker \partial_{\bf t}$.
 We state this in terms of the regular {{twisted}} model 
 $\sta C^{\ttt}(\ell)$,
 which is our main focus here.
 The 
 thicknesses of $\sta C^{\ttt}(\ell)$ 
 are given by $\ell \bf t$; we write $\sta p$ for the 
 morphism to the coarse space. 
  
\begin{pro}\label{pro:pic0decomp}
We have the exact sequence 
$$0\to \Pic^{\zer}(C_k) \xrightarrow {\ \sta p^* \ }  \Pic^{\zer}({\sta C^{\ttt}(\ell)/R_\ell})_k 
\xrightarrow {\ \ev \ }   
\ker \partial_{\ell\bf t}\to 0,$$
where $\Pic^{\zer}(C_k)=\Pic^0(C_k)$ is connected. 
\end{pro}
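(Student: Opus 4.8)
The plan is to prove exactness at each of the three terms, after first checking that $\ev$ really takes values in $\ker\partial_{\ell\bf t}$. Everything rests on the local dictionary, at each stacky node $[U/\pmmu_{\ell t_e}]$ of $\sta C^{\ttt}(\ell)_k$, between the $\pmmu_{\ell t_e}$-character $a_e\in\ZZ/\ell t_e$ attached to the fibre of $\sta L$ and the fractional part of the multidegree of $\sta L$; establishing this dictionary is the step I expect to be the real technical heart, as it is what couples the intersection theory of line bundles on a twisted curve to the purely combinatorial differential $\partial_{\ell\bf t}$.

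Concretely, I would first record that for any $\sta L$ on $\sta C^{\ttt}(\ell)_k$ the degree $\deg_{\sta C_v}\sta L\in\QQ$ has class modulo $\ZZ$ equal to $\sum_{e_+=v}\frac{a_e}{\ell t_e}-\sum_{e_-=v}\frac{a_e}{\ell t_e}$, so that the fractional part of the multidegree of $\sta L$ is exactly $\partial_{\ell\bf t}({\bf a}(\sta L))\in(\QQ/\ZZ)^{|V|}$. This is a local computation at each root-stack node. Granting it, membership of $\sta L$ in $\Pic^{\zer}$ forces all these fractional parts to vanish, hence $\ev(\sta L)\in\ker\partial_{\ell\bf t}$, so the right-hand map of the sequence is well defined.

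Injectivity of $\sta p^*$ follows from $\sta p_*\Ocal=\Ocal$ for the coarse-space morphism of a tame Deligne--Mumford stack together with the projection formula: if $\sta p^*M\cong\Ocal$ then $M\cong\sta p_*\sta p^*M\cong\Ocal$. Exactness in the middle is the assertion that a degree-zero line bundle with all characters trivial descends along $\sta p$: the vanishing $\ev(\sta L)=0$ says precisely that $\pmmu_{\ell t_e}$ acts trivially on every nodal fibre, which is the descent criterion, so $\sta L\cong\sta p^*M$; and $\deg_{C_v}M=\deg_{\sta C_v}\sta L=0$ puts $M$ in $\Pic^{\zer}(C_k)$. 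The opposite inclusion $\ev\circ\sta p^*=0$ is immediate, since pullbacks along $\sta p$ carry no character.

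For surjectivity of $\ev$ I would argue in two steps. The unrestricted character map $\Pic(\sta C^{\ttt}(\ell)_k)\to\bigoplus_e\ZZ/\ell t_e$ is surjective, because $\sta C^{\ttt}(\ell)_k$ is a root stack over $C_k$ along its nodes and each prescribed character is realised by an explicit local twist; so any ${\bf a}\in\ker\partial_{\ell\bf t}$ lifts to some $\sta L_0$ with $\ev(\sta L_0)={\bf a}$. By the dictionary the multidegree of $\sta L_0$ has vanishing fractional part, hence is integral, say $(d_v)\in\ZZ^{|V|}$; choosing a line bundle $M$ on $C_k$ of multidegree $(-d_v)$, which exists as multidegrees on the nodal curve are unconstrained, and setting $\sta L=\sta L_0\otimes\sta p^*M$ leaves the characters untouched while making $\deg_{\sta C_v}\sta L=0$ for all $v$. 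Thus $\sta L\in\Pic^{\zer}$ with $\ev(\sta L)={\bf a}$. Finally $\Pic^{\zer}(C_k)$ is the fibre over $0\in\ZZ^{|V|}$ of the multidegree map, i.e. the identity component $\Pic^0(C_k)$, the connected generalised Jacobian of the nodal curve, which gives the last assertion.
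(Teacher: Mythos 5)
Your proof is correct and follows essentially the same route as the paper's: your local dictionary --- the fractional part of the multidegree of $\sta L$ equals $\partial_{\ell\bf t}({\bf a}(\sta L))$ --- is precisely the paper's computation that the value of $\partial_{\ell\bf t}({\bf a})$ at a vertex $v$ equals $\deg D-\deg\lfloor D\rfloor\equiv 0\pmod\ZZ$ on the normalisation of each component, and both arguments identify the kernel of $\ev$ with the line bundles pulled back from the coarse space. If anything, you are more thorough: the paper treats injectivity and middle exactness as obvious and leaves the surjectivity of $\ev$ (your root-stack twist followed by a multidegree correction by a pullback from $C_k$) implicit, asserting it only through the remark that the components $\Pic^{\zer}_{\bf a}$ are non-empty.
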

\begin{rem}\label{rem:pic0decomp}
The above statement may be regarded as a decomposition of the 
special fibre of the relative Picard functor 
$\Pic^{\zer}({\sta C^{\ttt}(\ell)}/R_\ell)$ 
into non-empty connected components isomorphic to $\Pic^{\zer}_k=
\Pic^{\zer}C_k$ 
parametrized by $\ker \partial_{\ell {\bf t}}$
$$\Pic^{\zer}({\sta C^{\ttt}(\ell)}/R_\ell)_k=\bigsqcup_{{\bf a}\in  \ker \partial_{\ell {\bf t}}} \Pic^{\zer}_{\bf a}({\sta C^{\ttt}(\ell)}/R_\ell)_k.$$
\end{rem}
\noindent \emph{Proof of Proposition \ref{pro:pic0decomp}.} 
The group $\Pic^{\zer}({\sta C^{\ttt}(\ell)/R_\ell})_k$ decomposes into open and closed 
sub-loci $\Pic^{\zer}_{\bf a}({\sta C^{\ttt}(\ell)}/R_\ell)_k$ 
parametrised by characters 
${\bf a}\in C^1(\Gamma;\oplus_e \ZZ/t_e)$. 
The identification of $\Pic^{\zer}_{\bf 0}({\sta C^{\ttt}(\ell)}/R_\ell)_k$
with $\Pic^{\zer}(C_k)$ is obvious, the line bundles that can 
be obtained as pullback from the coarse space are those 
and only those whose characters are
trivial. 

We need to check that $\bf a$ lies in 
$\ker \partial_{\ell {\bf t}}$; 
at any vertex $v$ the induced $0$-chain 
$\partial_{\ell {\bf t}}({\bf a})$
should vanish. Consider a 
line bundle $\sta L$ 
of $\Pic^{\zer}({\sta C^{\ttt}(\ell)/R_\ell})_k$, 
we write $\sta L_{\sta X}$ for the induced bundle on the 
normalisation $\sta X$ of $\sta C_v$. 
The special points $\sta p_e$ of $\sta X$ are the pre-images of the nodes and 
are naturally attached to an oriented edge $e$ directed toward
the vertex $v$. By definition 
each special point contributes $a_e/t_e\in \QQ/\ZZ$ to 
$\partial_{\ell {\bf t}}({\bf a})$. 
The value of $\partial_{\ell {\bf t}}({\bf a})$ at $v$
is $\sum_{\sta p_e\in \sta X} a_e/t_e$, which coincides with 
$\deg D-\lfloor D\rfloor$ for $\sta{L}_{\sta X}=\Ocal_{\sta X}(D)$.
Here, $\lfloor D\rfloor$ is the divisor on the coarse moduli 
space $\sta p\colon \sta X\to X$ 
attached to $\sta p_*\sta{L}_{\sta X}$; clearly $\deg\lfloor D\rfloor\in \ZZ$.
The claim follows from the fact that $\deg D=0$ because $\sta L$
is in $\Pic^{\zer}({\sta C^{\ttt}(\ell)/R_\ell})_k$.\qed

\subsubsection{The kernel of characters $\ker \partial_{\bf t}$}
For any decoration $e\mapsto t_e$, we compute $\ker\partial_{\bf t}$.
For any power of prime number $p^l$, consider the 
edges $e$ whose thickness $t_e$ is a multiple of $p^l$.
This set of edges alongside with the adjacent vertices 
determine a sub-graph  $\Gamma_p^l$ of $\Gamma$. 
For any prime number,
we have 
$$\Gamma=\Gamma_p^0\supset\Gamma_p^1\supset \dots \supset\Gamma_{p}^l
\supset \Gamma_p^{l+1}\supset \dots 
\supset \Gamma_{p}^{\max_p+1}=\varnothing,$$
where $\max_p$
is the maximum $p$-adic valuation of 
the thicknesses $t_e$.
If we set $b_{p,l}=b(\Gamma_p^l)$ we have a  
sequence $b=b_{p,0}\ge 
b_{p,1}\ge \dots b_{p,\max_p+1}=0.$ We have 
\begin{equation}\label{eq:ker_t}
\ker\partial_{\bf t}\cong \bigoplus_p 
\bigoplus_{1\le l\le \max_p} (\ZZ/p^l)^{b_{p,l}-b_{p,l+1}}.\end{equation}

\subsection{The 
Galois action on the special fibre
of $\Pic^{\zer}$}\label{sect:action}
As mentioned above, the combinatorial analysis above
allows us to describe the $\pmmu_\ell$-action on the special fibre of 
$\Pic^{\zer}$.

For any choice of $\zeta \in \pmmu_\ell$,
each vector $\pmb a\in \bigoplus \ZZ/\ell t_e$
is naturally identified to a  
$\GG_m$-valued $1$-cochain $\bf a(\zeta)$. This happens because 
$a_e\in \ZZ/\ell t_e=\Hom(\pmmu_{\ell t_e},\GG_m)$
maps $\zeta$ to $\GG_m$. 
\begin{pro}
The connected component $\Pic^{\zer}_{\bf a}({\sta C^{\ttt}(\ell)/R_\ell})_k$
within 
the special fibre 
$\Pic^{\zer}({\sta C^{\ttt}(\ell)/R_\ell})_k$
is fixed by $\pmmu_\ell$ if and only if 
$\bf a(\zeta)\in \im \delta_{\GG_m}$ for all $\zeta$ in $\pmmu_\ell$.
Furthermore, 
within the remaining connected components of
$\Pic^{\zer}({\sta C^{\ttt}(\ell)}/R_\ell)_k$,
no point is fixed. 
\end{pro}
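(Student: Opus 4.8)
The plan is to show that each $\zeta\in\pmmu_\ell$ acts on every connected component $\Pic^{\zer}_{\bf a}(\sta C^{\ttt}(\ell)/R_\ell)_k$ by a \emph{translation}, and then to identify the translation vector combinatorially with the class of the $1$-cochain ${\bf a}(\zeta)$. Once the action is pinned down as a translation, the dichotomy in the statement is automatic: a translation on the connected group variety $\Pic^0(C_k)$ is either the identity or fixed-point-free.

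For $\zeta\in\pmmu_\ell$ let $g_\zeta$ denote the ghost automorphism of $\sta C^{\ttt}(\ell)_k$ of \S\ref{sect:ghosto} (locally $(z,w)\mapsto(\zeta z,w)$ at each node), so that $\zeta$ acts on $\Pic^{\zer}(\sta C^{\ttt}(\ell)/R_\ell)_k$ by the group automorphism $g_\zeta^*$. Set $T_\zeta(\sta L)=g_\zeta^*\sta L\otimes\sta L^{-1}$. Since $g_\zeta^*$ is compatible with tensor products, $T_\zeta$ is a group homomorphism; since $g_\zeta$ commutes with the stabiliser $\pmmu_{\ell t_e}$ at each node, it preserves the character map $\ev$, so $T_\zeta$ takes values in $\sta p^*\Pic^0(C_k)$ and $g_\zeta^*$ preserves every component setwise. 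Finally, because $g_\zeta$ restricts to the identity on the coarse curve $C_k$ we have $g_\zeta^*\circ\sta p^*=\sta p^*$, so $T_\zeta$ vanishes on $\sta p^*\Pic^0(C_k)$ and, by the exact sequence of Proposition \ref{pro:pic0decomp}, factors through $\ev$ as a homomorphism $\bar T_\zeta\colon\ker\partial_{\ell\bf t}\to\Pic^0(C_k)$. Hence $g_\zeta^*$ acts on $\Pic^{\zer}_{\bf a}$ precisely as the translation $\sta L\mapsto\sta L\otimes\bar T_\zeta({\bf a})$.

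The heart of the argument, and the step I expect to be most delicate, is the explicit evaluation of $\bar T_\zeta({\bf a})$. Working in the local picture $[U_k/\pmmu_{\ell t_e}]$ at a node $e$, the ghost $g_\zeta$ rescales the coordinate $z$ of the distinguished branch by $\zeta$ while fixing the coordinate $w$ of the other branch; since the fibre of $\sta L$ at the node is the $a_e$-th power of the tangent line to that branch, this rescaling multiplies the gluing isomorphism of $\sta L$ across the node by $\zeta^{a_e}$ and leaves the restriction of $\sta L$ to each component unchanged. Therefore $\bar T_\zeta({\bf a})$ is the bundle on $C_k$ obtained by gluing the trivial bundles on the $C_v$ with the constant transition data ${\bf a}(\zeta)\colon e\mapsto\zeta^{a_e}$; such bundles are classified by $C^1(\Gamma;\GG_m)/\im\delta_{\GG_m}=H^1(\Gamma;\GG_m)$, the toric part of $\Pic^0(C_k)$, and $\bar T_\zeta({\bf a})$ is exactly the class of ${\bf a}(\zeta)$ there. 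I would be careful over orientations and signs here, checking that reversing the orientation of $e$ alters $a_e$ and $\delta_{\GG_m}$ compatibly, so that the resulting class in $H^1(\Gamma;\GG_m)$ is well defined.

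With this identification the proposition follows formally. The component $\Pic^{\zer}_{\bf a}$ is fixed by all of $\pmmu_\ell$ if and only if $\bar T_\zeta({\bf a})=0$ for every $\zeta$, that is, if and only if ${\bf a}(\zeta)$ lies in $\im\delta_{\GG_m}$ for every $\zeta\in\pmmu_\ell$, which is the stated criterion. For the last assertion, if instead $\bar T_\zeta({\bf a})\neq0$ for some $\zeta$, then that $\zeta$ acts on $\Pic^{\zer}_{\bf a}$ as translation by a nonzero element of the connected group variety $\Pic^0(C_k)$; such a translation has no fixed points, so no point of $\Pic^{\zer}_{\bf a}$ is fixed by $\pmmu_\ell$. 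This gives both halves of the statement.
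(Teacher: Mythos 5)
Your proof is correct and takes essentially the same approach as the paper's: the paper's two-line argument rests on precisely your two key facts, namely the translation formula $\zeta^*\sta L=\sta L\otimes\tau({\bf a}(\zeta))$, where $\tau\colon C^1(\Gamma;\GG_m)\to\Pic C_k$ is the gluing-by-constant-transitions map whose kernel is $\im\delta_{\GG_m}$. The only difference is that the paper cites \cite{Chiodo-Farkas} and \cite{Chistab} for these facts, while you derive them directly via your homomorphism $\bar T_\zeta$ and the local computation at the nodes.
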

\begin{proof} This follows immediately 
from the fact that 
$\im({\delta}\colon C^0(\Gamma;\GG_m)\to C^1(\Gamma;\GG_m))$ equals 
$\ker (\tau \colon C^1(\Gamma;\GG_m)\to \Pic C_k)$ 
and from the observation that $\zeta^*\sta L=\sta L\otimes \tau(\bf a(\zeta))$.
See \cite[eq.(29)]{Chiodo-Farkas} and \cite[Prop.~2.18]{Chistab}.
\end{proof}

\section{The N\'eron model of the Jacobian}\label{sect:Nermod}
We give two presentations of the N\'eron model
in \S\ref{sect:nerRaynaud} and 
\S\ref{sect:nerdescent}. The first presentation
is over $R$ 
and is in the same spirit of Raynaud's theorem \cite{Ra}; here,
however,
we use the regular twisted model instead of 
the regular semi-stable model. Since 
the special fibre of the 
former is more often irreducible than the latter,
the presentation of $\nero(\Pic^{\zer}C_K)$ given here is already a 
worth-mentioning
improvement. 
Then, in \S\ref{sect:nerdescent}, 
we work over a base $R_\ell$ and the N\'eron model descends
to $R$ from a group $R_\ell$-scheme  representing line bundles of degree $0$ on 
every component of the regular twisted model.

\subsection{The N\'eron model via the regular twisted model}
\label{sect:nerRaynaud}
Consider 
$G={\Pic}^{\toto}(\sta C^{\ttt})$. 
Because $\sta C^{\ttt}$ is smooth, the morphism 
$G(R)\to G(K)$ is surjective; this 
is very useful in view of the criterion 
\cite[7.1/1]{BLR}, which guarantees that a \emph{finite-type} $R$-group scheme 
is the N\'eron model of its generic fibre as soon as 
$G(R^{\textrm{sh}})\to G(K^{\textrm{sh}})$ is bijective
where $\textrm{sh}$ denotes the strict Henselization.

Here, we can interchange $R$ and its strict 
Henselization $R^{\textrm sh}$ because 
all objects involved in the statement are compatible with \'etale 
base changes (of course $\sta C^{\ttt}$ 
remains regular under any such base change). 
Recall also that N\'eron 
models descend from the strict Henselization 
of $R$ to $R$ itself (\cite[6.5/3]{BLR}).

We notice, however, that the group 
${\Pic}^{\toto}(\sta C^{\ttt})$ is \emph{not} of finite type. 
On the one hand, its 
generic fibre is of 
finite type and coincides
with $\Pic^0(C_K)$. On the other hand, 
its special fibre ${\Pic}^{\toto}(\sta C^{\ttt})_k$ 
coincides with ${\Pic}^{\toto}(\sta C_k)$, where $\sta C_k$ is 
the special fibre of $\sta C^\ttt$.
The following decomposition of the  
Picard group ${\Pic}^{\toto}(\sta C_k)$ into disconnected
components shows that the special fibre is not finite 
as soon as the curve is reducible. 
Since the regular twisted model is reducible 
more often than the the usual regular semi-stable model, 
it is worthwhile to point out a simple 
proposition summarizing the above discussion. 

\begin{pro} \label{pro:irreduciblestablemodel}
The following conditions are equivalent:
\begin{enumerate}
\item $C_K$ admits an irreducible stable model;
\item $C_K$ admits an irreducible 
regular twisted model; 
\item ${\Pic}^{\toto}(\sta C^{\ttt})$ is of finite type.
\end{enumerate}
Furthermore, under the above conditions, 
we have  
$$N(\Pic_K^{\zer}(C_K))=\Pic_R^{\zer}({\sta C}^\ttt),$$
where the functor $\Pic^{\zer}$, in this case, 
coincides with $\Pic^{\toto}$ on both sides.
\qed
\end{pro}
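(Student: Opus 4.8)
The plan is to prove the equivalence of the three conditions and then the \ner model identity.

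\medskip
\noindent\textbf{Equivalence of (1), (2) and (3).}
First I would establish $(1)\Leftrightarrow(2)$, which should be essentially tautological from the construction of the regular twisted model. The regular twisted model $\sta C^\ttt$ is obtained from the stable model $C_R$ by replacing each node of the special fibre with a stacky chart $[U/\pmmu_t]$; this operation is purely local at the nodes and affects neither the set of irreducible components of the special fibre nor their incidences. Hence the special fibre $\sta C_k$ of $\sta C^\ttt$ is irreducible precisely when the special fibre $C_k$ of the stable model is irreducible, and an irreducible stable model exists if and only if an irreducible regular twisted model exists.

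For $(2)\Leftrightarrow(3)$ I would invoke the decomposition of the special fibre already recorded in the excerpt. By the discussion preceding the proposition, the generic fibre of $\Pic^\toto(\sta C^\ttt)$ is of finite type (it is $\Pic^0(C_K)$), so finiteness of type is governed entirely by the special fibre $\Pic^\toto(\sta C^\ttt)_k=\Pic^\toto(\sta C_k)$. When $\sta C_k$ is irreducible, total degree zero coincides with degree zero on the unique component and $\Pic^\toto(\sta C_k)=\Pic^0(C_k)$ is connected and of finite type, giving $(2)\Rightarrow(3)$. Conversely, if $\sta C_k$ is reducible, then distinct multidegrees of the same total degree yield distinct connected components of $\Pic^\toto(\sta C_k)$; shifting the degree on one component and compensating on another produces infinitely many components (the multidegree lattice in the total-degree-$d$ locus is infinite), so the special fibre is not of finite type and $(3)\Rightarrow(2)$ follows by contraposition.

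\medskip
\noindent\textbf{The \ner model identity.}
Under these equivalent conditions I would verify the \ner property using the criterion \cite[7.1/1]{BLR} exactly as set up in the paragraph preceding the proposition. The group scheme $G=\Pic^\toto(\sta C^\ttt)=\Pic^\toto_R({\sta C}^\ttt)$ is now of finite type by (3). Smoothness of $\sta C^\ttt$ over $R$ gives surjectivity of $G(R^{\textrm{sh}})\to G(K^{\textrm{sh}})$; injectivity is exactly the separatedness statement, which for the degree-zero (equivalently, here, total-degree-zero) part is Proposition \ref{pro:separated}. Hence $G(R^{\textrm{sh}})\to G(K^{\textrm{sh}})$ is bijective, and since $G$ is of finite type and smooth over $R$, the criterion identifies it with the \ner model of its generic fibre $\Pic^0(C_K)$. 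Finally, under irreducibility, total degree zero equals degree zero on the single component, so $\Pic^\toto$ and $\Pic^{\zer}$ agree on both the $R$- and $K$-sides, yielding $N(\Pic^{\zer}_K(C_K))=\Pic^{\zer}_R({\sta C}^\ttt)$.

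\medskip
I expect the main obstacle to be the reducible direction of $(3)\Rightarrow(2)$: one must argue carefully that reducibility forces infinitely many connected components of the special fibre. The point is that the multidegrees realising a fixed total degree on a reducible curve form an infinite set (a coset of the rank-$(|V|-1)$ sublattice of multidegrees summing to zero), and each distinct multidegree lands in a distinct connected component of $\Pic^\toto(\sta C_k)$; the remaining steps are direct applications of the separatedness proposition and the \ner criterion already prepared in the text.
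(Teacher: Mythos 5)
Your proposal follows essentially the same route as the paper, which states this proposition with a \qed precisely as a summary of the discussion preceding it: the criterion \cite[7.1/1]{BLR}, surjectivity of $G(R^{\mathrm{sh}})\to G(K^{\mathrm{sh}})$ from regularity of $\sta C^\ttt$, injectivity from Proposition \ref{pro:separated}, and the multidegree decomposition of the special fibre to detect (non-)finiteness of type. Your handling of the reducible direction $(3)\Rightarrow(2)$ is exactly the intended argument.

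There is, however, one genuinely false intermediate claim. When $\sta C_k$ is irreducible but nodal, it is \emph{not} true that $\Pic^{\toto}(\sta C_k)=\Pic^{0}(C_k)$ is connected: the twisted curve carries stacky nodes, and the evaluation sequence \eqref{eq:restrictioncharacters} shows that $\Pic^{\toto}(\sta C_k)$ is an extension of the finite character group $\bigoplus_{e}\ZZ/t_e$ by $\Pic^{0}(C_k)$, hence has $\prod_e t_e$ connected components (for a one-vertex dual graph every edge is a loop, so $\ker\partial_{\bf t}$ is all of $\bigoplus_e\ZZ/t_e$, consistently with Proposition \ref{pro:pic0decomp}). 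Your conclusion for $(2)\Rightarrow(3)$ survives because this component group is finite, so $\Pic^{\toto}(\sta C_k)$ is still of finite type; but connectedness as you state it would contradict classical facts --- an elliptic curve with $I_n$ reduction has irreducible stable model, yet its N\'eron model has component group $\ZZ/n$, and in the present framework those $n$ components are exactly the node characters. This non-connectedness is in fact the whole point of the proposition: the twisted model sees the component group of the N\'eron model through characters at stacky nodes rather than through multidegrees. A second, minor slip: $\sta C^\ttt$ is not smooth \emph{over $R$} (its special fibre is nodal); what yields surjectivity of $G(R^{\mathrm{sh}})\to G(K^{\mathrm{sh}})$ is regularity of the total space $\sta C^\ttt$, which makes the closure of any divisor on the generic fibre Cartier.
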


We generalise the above proposition by removing 
the condition $(1)$ (or their equivalent versions (2) or (3)).
We pass to the quotient 
$$G:={\Pic}^{\toto}(\sta C^{\ttt})/\ol{E},$$
where $\ol E$ is the scheme-theoretic closure
within ${\Pic}^{\toto}(\sta C^{\ttt})$ of the 
zero-section of $\Pic^{\zer}$. The group 
$\ol E$ is formed by line bundles 
of the form $\Ocal(D)$ where is $D$ is supported on the 
special fibre of $\sta C^{\ttt}$.
This quotient insures
the bijectivity of 
$G(R^{\textrm{sh}})\to G(K^{\textrm{sh}})$.
It remains to determine that $G$ is of finite type. Again, this concerns
the special fibre $G_k$ 
and amounts to determine the finiteness of the 
group of connected components of $G_k$. 

Proposition \ref{pro:pictot0decomp} is a preliminary step; it 
computes the group of connected components
of ${\Pic}^{\toto}(\sta C^{\ttt}_k)$ in terms of the image of the 
homomorphism 
$$\wt \partial^{\bf t}=(\partial^{\bf t} \times q_{\bf t})
 \colon C^1(\Gamma;\ZZ)\to C^0(\Gamma;\QQ)\times C^1(\Gamma;\oplus_e \ZZ/t_e),$$
where on the second factor, 
for each $\ZZ$-valued $1$-cochain 
$g\colon e\mapsto g(e)$, 
the morphism $q_{\bf t}$ is simply the quotient by $(t_e)$.
Below, recall that $\mathrm{ev}$ is the evaluation 
homomorphism \eqref{eq:ev} and $\underline{\deg}$ is the (possibly rational)
multi-degree.
\begin{pro}\label{pro:pictot0decomp}
The image of $({\underline{\deg}},\, \ev)\colon  {\Pic}^{\toto}(\sta C^{\ttt}_k)\to 
C^0(\Gamma;\QQ)\times C^1(\Gamma;\oplus_e \ZZ/t_e)$
lies in $\wt\partial^{\bf t} C^1(\Gamma,\ZZ)$ and 
we have the exact sequence 
$$0\to \Pic^{\zer}(C_k) \xrightarrow{\ {\sta p}^*\ } {\Pic}^{\toto}(\sta C^{\ttt}_k) \xrightarrow{\ (\underline{\deg},\, \ev) \ }  
\wt\partial^{\bf t} C^1(\Gamma,\ZZ)\to 0.$$
\end{pro}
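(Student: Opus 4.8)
The plan is to adapt the argument for Proposition \ref{pro:pic0decomp}, now permitting line bundles of nonzero multidegree subject only to total degree $0$. There are three things to verify: that $(\underline{\deg},\ev)(\sta L)$ lands in $\wt\partial^{\bf t}C^1(\Gamma;\ZZ)$ for every $\sta L\in\Pic^{\toto}(\sta C^{\ttt}_k)$; that the kernel of $(\underline{\deg},\ev)$ is exactly the image of $\sta p^*$; and that $(\underline{\deg},\ev)$ surjects onto $\wt\partial^{\bf t}C^1(\Gamma;\ZZ)$. The single new input relative to Proposition \ref{pro:pic0decomp} is that now only the \emph{fractional part} of the multidegree is pinned down by the characters. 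Concretely, the computation carried out in the proof of Proposition \ref{pro:pic0decomp} gives the congruence
\begin{equation*}
\underline{\deg}(\sta L)\equiv \partial_{\bf t}\bigl(\ev(\sta L)\bigr)\pmod{\ZZ}\qquad\text{in }C^0(\Gamma;\QQ/\ZZ),
\end{equation*}
i.e. at each vertex $v$ the quantity $\deg_v\sta L-\sum_e \tfrac{a_e}{t_e}$ is an integer, the sum running over the branches at $v$ with signs dictated by the orientation.

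For the image inclusion and the kernel I would argue as follows. Fix $\sta L$ and pick any integral lift $g_0\in C^1(\Gamma;\ZZ)$ of $\ev(\sta L)$, so $q_{\bf t}(g_0)=\ev(\sta L)$. Reducing the displayed congruence modulo $\ZZ$ shows $\partial^{\bf t}(g_0)\equiv\underline{\deg}(\sta L)\pmod{\ZZ}$, so that $D:=\underline{\deg}(\sta L)-\partial^{\bf t}(g_0)$ is an \emph{integral} $0$-cochain. Its total sum vanishes: $\sum_v\underline{\deg}(\sta L)(v)=0$ because $\sta L\in\Pic^{\toto}$, while $\sum_v\partial^{\bf t}(g_0)(v)=0$ by adjointness of $\partial^{\bf t}$ and $\delta$ applied to the constant cochain $\mathbf{1}$ (since $\delta\mathbf{1}=0$). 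For a connected graph the image of the ordinary boundary $\partial=\partial^{\bf 1}\colon C^1(\Gamma;\ZZ)\to C^0(\Gamma;\ZZ)$ is precisely the lattice of integral $0$-cochains of total degree $0$, so $D=\partial(k)$ for some $k\in C^1(\Gamma;\ZZ)$. Setting $g:=g_0+h$ with $h(e)=t_ek(e)$ leaves the character untouched and satisfies $\partial^{\bf t}(h)=\partial(k)=D$, whence $\wt\partial^{\bf t}(g)=(\underline{\deg}(\sta L),\ev(\sta L))$; this gives the asserted inclusion. Exactness in the middle is then immediate: if $(\underline{\deg},\ev)(\sta L)=0$ then all characters vanish, so $\sta L$ descends along the coarse map to a line bundle $M$ on $C_k$ with $\sta L\cong\sta p^*M$, and $\underline{\deg}(\sta L)=0$ forces $M\in\Pic^{\zer}(C_k)$; conversely such pullbacks clearly lie in the kernel, and $\sta p^*$ is injective since $\sta p_*\sta p^*=\id$.

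It remains to prove surjectivity onto $\wt\partial^{\bf t}C^1(\Gamma;\ZZ)$, and this is the step I expect to require the most care. I would proceed in two moves. Given $g\in C^1(\Gamma;\ZZ)$, first realize the target character $q_{\bf t}(g)$ by some $\sta L_0\in\Pic^{\toto}(\sta C^{\ttt}_k)$: each generator $\chi_e$ is the character of a twisting line bundle supported at the stacky node $e$ coming from the $\pmmu_{t_e}$-structure of the regular twisted model, and after tensoring with a suitable pullback from $C_k$ one adjusts its total degree to $0$. Since $\sta L_0$ and the target share the character $q_{\bf t}(g)$, the displayed congruence gives $\underline{\deg}(\sta L_0)\equiv\partial^{\bf t}(g)\pmod{\ZZ}$, so $\partial^{\bf t}(g)-\underline{\deg}(\sta L_0)$ is integral of total degree $0$; by the description of $\im\partial$ used above it equals $\underline{\deg}(\sta p^*M')$ for a line bundle $M'$ of total degree $0$ on $C_k$. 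Then $\sta L:=\sta L_0\otimes\sta p^*M'$ has character $q_{\bf t}(g)$ and multidegree $\partial^{\bf t}(g)$, hence $(\underline{\deg},\ev)(\sta L)=\wt\partial^{\bf t}(g)$. The only genuinely geometric ingredient here is the \emph{surjectivity of $\ev$}, i.e. the existence of a line bundle with an arbitrarily prescribed character; this reduces to the local root-stack computation on the chart $[U/\pmmu_{t_e}]$ showing that the generator of $\Hom(\pmmu_{t_e},\GG_m)$ is attained by an honest line bundle, and it is the point where the twisted (rather than nodal) structure is essential. Everything else is the same linear algebra of $\partial^{\bf t}$, $\delta$ and $\partial$ over $\ZZ$ and $\QQ/\ZZ$ already assembled above.
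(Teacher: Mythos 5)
Your proof is correct, but it takes a genuinely different route from the paper's. The paper argues constructively: using the exact sequence \eqref{eq:restrictioncharacters} it writes every $\sta L\in{\Pic}^{\toto}(\sta C^{\ttt}_k)$ as $\sta p^*\sta M'_{C}\otimes\bigotimes_e\sta M_e^{\otimes m_e}$, where the $\sta M_e$ are explicit twisting bundles glued at the stacky nodes, computes by a local analysis that $\underline{\deg}(\sta M_e)=\partial^{\bf t}(\chi_e)$ and that $\sta M_e^{\otimes t_e}$ is a pullback, and then identifies $(\underline{\deg},\,\ev)$ with $\wt\partial^{\bf t}\circ\tau$, from which containment of the image, exactness in the middle, and surjectivity all drop out simultaneously. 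You instead use the degree computation inside the proof of Proposition \ref{pro:pic0decomp} only through the congruence $\underline{\deg}(\sta L)\equiv\partial_{\bf t}(\ev(\sta L))\pmod{\ZZ}$, and you do the remaining integral bookkeeping with the homological fact that, for a connected graph, $\im\bigl(\partial\colon C^1(\Gamma;\ZZ)\to C^0(\Gamma;\ZZ)\bigr)$ is exactly the lattice of integral $0$-cochains with vanishing total sum; the twisting bundles enter only through the surjectivity of $\ev$, and you never need their multidegrees. Both arguments rest on the same geometric inputs (descent along $\sta p$ for character-free bundles, existence of the node twistings, and the fractional-degree computation at the branches), so the trade-off is this: your assembly is leaner and makes explicit where connectedness of $\Gamma$ is used, whereas the paper's explicit generators are not merely a proof device --- the identification $(\underline{\deg},\,\ev)=\wt\partial^{\bf t}\circ\tau$ and the formula for $\underline{\deg}(\sta M_e)$ are reused verbatim in Proposition \ref{cor:local2}, where the image of $\ol E$ is computed by expanding $\Ocal(\sta C_v)$ as $\bigotimes_{e_+=v}\sta M_e$, so the constructive route pays for itself later in the paper. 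One small point you should make explicit: adjusting the total degree of your character-realizing bundle to zero by tensoring with a pullback from $C_k$ presupposes that this total degree is an integer; this does follow from your own congruence by summing over all vertices (the right-hand side has vanishing total sum in $\QQ/\ZZ$), but as written it is assumed silently.
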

\begin{proof}
Let us write $\sta C:= \sta C^{\ttt}_k$,  
$C:=C_k$, and $\sta p\colon \sta C
\to C$. We only need to show that $(\underline{\deg},\, \ev)$ maps to  
 $\wt\partial^{\bf t} C^1(\Gamma,\ZZ)$ and onto it.
We do so, by providing a geometrical interpretation 
to $(\underline{\deg},\,\ev)$. 

The exact sequence 
\begin{equation}\label{eq:restrictioncharacters}
0\to {\Pic}^{\toto}(C) \to 
{\Pic}^{\toto}(\sta C) 
\xrightarrow{\mathrm{ev}} C^1(\Gamma, \oplus_e \ZZ/t_e)\to 0\end{equation}
(which may be regarded as the 
long exact sequence attached to $1\to \GG_m\to 
R\sta p_*\GG_m\to R\sta p_*\GG_m/\GG_m$, 
see for instance \cite{Chistab}), 
allows us to express every line bundle $\sta L\to \sta C$
of total degree $0$ as 
$$\sta p^*\sta M_{C}
\otimes \bigotimes_{\substack{{e\in E}\\{0\le a_e< t_e}}} 
\sta M_e^{\otimes a_e},$$ 
where $\sta M_{C}$ is a line bundle of total degree $0$ 
on $C$ and 
$\sta M_e$ is the gluing of the structure sheaf $\Ocal$ on the 
regular locus $\sta C^\regu=C^\regu$ 
and the $\pmmu_{t_e}$-linearized 
line bundle $\Ocal$ on $\{zw=0\}$ given by the three characters 
$1,t_e-1,1\in \ZZ/t_e$ acting on $z,w$, and the fibre parameter
$\lambda$, respectively. 
Note that the restriction of
$\sta p_*\sta M_e$ to 
the $z^{t_e}$-branch without the origin $\Spec k[z^{t_e},z^{-t_e}]$
is the structure sheaf $\Ocal$ via the identification of 
the  parameter along the fibre  
with the $\pmmu_{t_e}$-invariant parameter $\la z^{-1}$.
Similarly  the restriction of
$\pi_*\sta M_e$ to 
the $w^{t_e}$-branch without the origin $\Spec k[w^{t_e},w^{-t_e}]$
is the structure sheaf $\Ocal$ via the identification of 
the local parameter along the fibre
with the $\pmmu_{t_e}$-invariant parameter $\la w$.
Then, the gluing is given by the canonical 
identification of the structure sheaves on $\sta C^\regu=C^\regu$ 
on the $z$-branch $\Spec k[z^{t_e},z^{-t_e}]$
and on the $w$-branch $\Spec k[w^{t_e},w^{-t_e}]$.

The local analysis above shows that 
$\sta M_e$ is a line 
bundle on $\sta C$ of multi-degree 
$\partial^{\bf t}(\chi_e)$ and that
the bundles  
$\sta M_e^{\otimes t_e}$ span $\sta p^*\Pic^{\toto}(C)$ (their 
$\pmmu_{t_e}$-linearization is trivial).
This allows us to rewrite 
\begin{equation}\label{eq:piecesofL}
\sta L\cong \sta p^*\sta M_{C}
\otimes \bigotimes_{\substack{{e\in E}\\{0\le a_e<t_e}}} 
\sta M_e^{\otimes a_e}= 
\sta p^*\sta M'_{C}\otimes \bigotimes_{\substack{{e\in E}\\{m_e\in \ZZ}}} 
\sta M_e^{\otimes m_e},\qquad \text{(with $\sta M'_C\in \Pic^{\zer}(C)$),}
\end{equation}
and to define the homomorphism 
\begin{align*}\label{eq:modoutidcomp}
\tau\colon {\Pic}^{\toto}(\sta C)&\to{\Pic}^{\toto}(\sta C)/\sta p^* {\Pic}^{\zer}(C)\\
\sta L &\mapsto \bigotimes_{\substack{{e\in E}\\{m_e\in \ZZ}}} 
\sta M_e^{\otimes m_e},
\end{align*}
whenever $(m_e)_{e\in E}$ fit in \eqref{eq:piecesofL}. 
Indeed, since any other choice $m'_e$ of $m_e\in \ZZ_e$ should be compatible with 
\eqref{eq:piecesofL}, we have $m'_e-m_e\in t_e\ZZ$. Now, notice that 
any line bundle of the form 
$\bigotimes_{e} 
\sta M_e^{\otimes bt_e}$ for $b\in \ZZ$ is a pullback from $C$.

We can now identify the map $(\underline{\deg},\,\ev)$ with 
$\wt\partial^{\bf t}\circ \tau$. Indeed 
$\underline{\deg} (\bigotimes_{e} 
\sta M_e^{\otimes m_e})=\partial^{\bf t} (m_e)$ and 
$\ev (\bigotimes_{e} 
\sta M_e^{\otimes m_e})=q_{\bf t}(m_e).$  
The surjectivity follows immediately,  any element of 
the form $(\partial^{\bf t}(m_e),q_{\bf t}(m_e))$ 
is the image of $\bigotimes_{e} 
\sta M_e^{\otimes m_e}$ via $(\underline{\deg},\,\ev)$.
\end{proof}

\begin{pro}\label{cor:local2}
The group of connected components of the special fibre of
the quotient ${\Pic}^{\toto}(\sta C^{\ttt})/\ol{E},$ is finite and it is given by the \emph{group of components}
$$\Phi_{{\bf t}}(\Gamma)=\frac{\wt\partial^{\bf t} C^1(\Gamma;\ZZ)}
{\wt \partial^{\bf t} \delta C^0(\Gamma;\ZZ)}$$
which is an extension of the 
critical group $\mathcal K_{\bf t}(\Gamma)$ by the 
kernel of characters $\ker(\partial _{{\bf t}})$ 
\begin{equation}
\label{eq:allgroups}0\to \ker(\partial _{{\bf t}})\to \Phi_{\bf t}(\Gamma)\to 
\mathcal K_{\bf t}(\Gamma)\to 0.\end{equation}
\end{pro}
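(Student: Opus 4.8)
The plan is to compute the group of connected components of the special fibre $(\Pic^{\toto}(\sta C^{\ttt})/\ol E)_k$ directly and then to recognise the resulting combinatorial quotient as an extension. Since $\ol E$ is flat over $R$ with generic fibre the single point $E_K$, its special fibre $\ol E_k$ is a finite subgroup of $\Pic^{\toto}(\sta C^{\ttt}_k)$, and for a group scheme locally of finite type modulo a finite subgroup one has $\pi_0\big(\Pic^{\toto}(\sta C^{\ttt}_k)/\ol E_k\big)=\pi_0\big(\Pic^{\toto}(\sta C^{\ttt}_k)\big)\big/\,\im(\ol E_k)$. By Proposition \ref{pro:pictot0decomp} the identity component of $\Pic^{\toto}(\sta C^{\ttt}_k)$ is the connected group $\Pic^{\zer}(C_k)=\Pic^0(C_k)$, so $\pi_0\big(\Pic^{\toto}(\sta C^{\ttt}_k)\big)=\wt\partial^{\bf t}C^1(\Gamma;\ZZ)$, with the quotient map realised by $(\underline{\deg},\ev)$. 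Thus everything reduces to computing the image of $\ol E_k$ under $(\underline{\deg},\ev)$.

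Here I would use that $\ol E_k$ consists of the restrictions to $\sta C_k$ of the vertical line bundles $\Ocal_{\sta C^{\ttt}}(D)$ with $D=\sum_v n_v\sta C_v$, so that it suffices to treat the generators $\Ocal(\sta C_v)$. Reading off the local models $[U/\pmmu_{t_e}]$ at the nodes, the two branches through a node of thickness $t_e$ meet with intersection number $1/t_e$, whence the multidegree of $\Ocal(\sta C_v)$ on $\sta C_k$ is, up to sign, $\partial^{\bf t}\delta(\chi_v)$ from \eqref{eq:partialonQ}; matching the $\pmmu_{t_e}$-linearisation against the bundles $\sta M_e$ of the proof of Proposition \ref{pro:pictot0decomp} shows that $\ev(\Ocal(\sta C_v))=q_{\bf t}\delta(\chi_v)$. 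Hence $(\underline{\deg},\ev)\big(\Ocal(\sta C_v)\big)=\wt\partial^{\bf t}\delta(\chi_v)$, and letting $v$ and the $n_v$ vary gives $\im(\ol E_k)=\wt\partial^{\bf t}\delta C^0(\Gamma;\ZZ)$. This yields $\pi_0\big((\Pic^{\toto}(\sta C^{\ttt})/\ol E)_k\big)=\Phi_{\bf t}(\Gamma)$, establishing the formula.

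It then remains to produce the extension \eqref{eq:allgroups}. The projection onto the first factor $C^0(\Gamma;\QQ)$ carries $\wt\partial^{\bf t}C^1(\Gamma;\ZZ)$ onto $\partial^{\bf t}C^1(\Gamma;\ZZ)$ and $\wt\partial^{\bf t}\delta C^0(\Gamma;\ZZ)$ onto $\partial^{\bf t}\delta C^0(\Gamma;\ZZ)$, hence induces a surjection $\Phi_{\bf t}(\Gamma)\twoheadrightarrow \mathcal K_{\bf t}(\Gamma)$. Given a class $[\wt\partial^{\bf t}(g)]$ in the kernel I would write $\partial^{\bf t}(g)=\partial^{\bf t}\delta(f)$ with $f\in C^0(\Gamma;\ZZ)$ and subtract $\wt\partial^{\bf t}\delta(f)$ to reduce it to $[(0,q_{\bf t}(h))]$ with $h\in\ker(\partial^{\bf t}\!\mid_\ZZ)$; the assignment $h\mapsto(0,q_{\bf t}(h))$ is injective on $\Phi_{\bf t}(\Gamma)$ because $\partial^{\bf t}\delta f=0$ forces $f$ constant on the connected graph $\Gamma$. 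This identifies the kernel with $q_{\bf t}(\ker(\partial^{\bf t}\!\mid_\ZZ))$, and the last point is that this equals $\ker(\partial_{\bf t})$: the inclusion $\subseteq$ is clear since $\partial_{\bf t}$ is the reduction mod $\ZZ$ of $\partial^{\bf t}\!\mid_\ZZ$, while for the converse I would lift $\bar g\in\ker(\partial_{\bf t})$ to $g\in C^1(\Gamma;\ZZ)$, note that $\partial^{\bf t}(g)$ is then integral and sums to zero, hence is an ordinary boundary $\partial(c)$ by connectedness of $\Gamma$, and correct $g$ by the element with $e$-coordinate $t_e c(e)$, which alters $\partial^{\bf t}(g)$ precisely by $\partial(c)$ while leaving $q_{\bf t}(g)=\bar g$ unchanged, producing the required element of $\ker(\partial^{\bf t}\!\mid_\ZZ)$.

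Finiteness of $\Phi_{\bf t}(\Gamma)$ is then immediate: it is an extension of the finite group $\mathcal K_{\bf t}(\Gamma)$ (both $\partial^{\bf t}C^1(\Gamma;\ZZ)$ and $\partial^{\bf t}\delta C^0(\Gamma;\ZZ)$ are lattices of rank $|V|-1$ inside the sum-zero hyperplane) by the finite group $\ker(\partial_{\bf t})\subseteq\bigoplus_e\ZZ/t_e$. I expect the main obstacle to be the second paragraph, namely pinning down $\im(\ol E_k)$ with the correct thickness factors $1/t_e$ and the correct characters at the nodes: once the intersection theory on the stack $\sta C^{\ttt}$ and the comparison with the bundles $\sta M_e$ are in hand, the remaining identifications are formal diagram-chasing.
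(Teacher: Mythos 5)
Your proof follows the same route as the paper's: the heart in both cases is the identification of $(\underline{\deg},\ev)(\ol E_k)$ with $\wt\partial^{\bf t}\delta C^0(\Gamma;\ZZ)$, obtained by restricting to the generators $\Ocal(\sta C_v)$ and matching them against the bundles $\sta M_e$ of Proposition \ref{pro:pictot0decomp}; your second paragraph is exactly the paper's computation $(\underline{\deg},\ev)(\Ocal(\sta C_v))=\wt\partial^{\bf t}\delta(\chi_v)$. Your third paragraph --- the explicit check that the kernel of $\Phi_{\bf t}(\Gamma)\to\mathcal K_{\bf t}(\Gamma)$ is $\ker(\partial_{\bf t})$, via lifting mod-$t_e$ cochains and correcting by $(t_e c(e))_{e\in E}$ --- is correct and is in fact a useful addition: the paper asserts the extension \eqref{eq:allgroups} without writing out this diagram chase.

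There is, however, a genuine error in your first paragraph: $\ol E_k$ is \emph{not} finite. Flatness of $\ol E$ over $R$ with a one-point generic fibre only forces $\dim\ol E_k=0$; it does not bound the number of points, because $\ol E$ is not proper over $R$. Indeed your own second paragraph shows $\ol E_k$ is infinite whenever $C_k$ is reducible: its image under $(\underline{\deg},\ev)$ is $\wt\partial^{\bf t}\delta C^0(\Gamma;\ZZ)\cong C^0(\Gamma;\ZZ)/(\text{constants})\cong\ZZ^{|V|-1}$, since the kernel of the Laplacian on a connected graph consists of the constant cochains. Worse, if $\ol E_k$ were finite the proposition itself would fail: $\pi_0\bigl(\Pic^{\toto}(\sta C^{\ttt}_k)\bigr)=\wt\partial^{\bf t}C^1(\Gamma;\ZZ)$ has rank $|V|-1$, so quotienting it by the image of a finite group could never yield a finite component group. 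Fortunately the gap is repairable on the spot: the identity $\pi_0(G_k/H)=\pi_0(G_k)/\im\bigl(\pi_0(H)\bigr)$ that you actually use holds for any closed subgroup scheme $H$ of a group scheme locally of finite type over $k$ (the image of the identity component $G_k^0$ in $G_k/H$ is open, connected, and closed --- its complement being a union of open translates --- hence it is the identity component of the quotient); alternatively, it suffices to observe that $\ol E_k$ is discrete and meets the identity component $\Pic^{\zer}(C_k)$ only in the origin, so it injects into $\pi_0$. With that substitution, the rest of your argument, including the finiteness conclusion via the extension, stands.
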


\begin{proof}
We need to identify 
$(\underline{\deg},\,\ev)(\ol E)\subseteq \wt\partial^{\bf t} C^1(\Gamma,\ZZ)$.
Any line bundle of the form 
of the form $\Ocal(D)$ where is $D=\sum_{v\in V}n_v \sta C_v$
where $n_v\in \ZZ$ and $\sta C_v$ is the irreducible component 
of the special fibre of ${\sta C}^{\ttt}$ corresponding to the 
vertex $v\in V$. We set 
$$\sta L_v=\Ocal(\sta C_v)_{\mid (\sta C^\ttt)_k}$$
and show
$$(\underline{\deg},\,\ev)\Ocal(\sta C_v)=\wt \partial^{\bf t}\delta(\chi_v).$$
We have 
$$(\underline{\deg},\,\ev)(\sta L_v)=
(\underline{\deg},\,\ev)(\bigotimes_{e_+=v} 
\sta M_e)=(\partial^{\wt t}, q_{\bf e}) \sum_{e_+=v} \chi_e =
\wt\partial^{\bf t}\delta(\chi_v).$$
Finally, $\partial^{\bf t} C^1(\Gamma;\ZZ)$ contains 
$\partial C^1(\Gamma;\ZZ)$
and $\partial^{\bf t} \delta C^0(\Gamma;\ZZ)$ 
contains $\partial \delta C^0(\Gamma;\ZZ)$. Furthermore they 
are both contained in the kernel of 
$C^0(\Gamma;\QQ)\to \QQ; (\sum_v n_v[v])\to \sum v)$. Therefore 
their rank equals that of $\partial  C^1(\Gamma;\ZZ)$ and
$\partial \delta C^0(\Gamma;\ZZ)$, which is $\abs{V}-1$. 
This allows us to conclude that the quotient group is finite.
\end{proof}

\begin{thm}\label{thm:local2}
Let $C_R$ be a stable curve with smooth generic fibre 
over a complete 
discrete valuation ring $R$.  
Then,  
the \ner model of the  Jacobian of $C_K$ is 
$${\nerosta}(\Pic^{\zer}_K)={\Pic}^{\toto}(\sta C^{\ttt})/\ol E,$$
where $\ol E$ is the scheme-theoretic closure
within ${\Pic}^{\toto}(\sta C^{\ttt})$ of the 
zero-section of $\Pic^{\zer}$.
\end{thm}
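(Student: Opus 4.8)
The plan is to verify that $G := \Pic^{\toto}(\sta C^{\ttt})/\ol E$ satisfies the hypotheses of the Néron mapping criterion \cite[7.1/1]{BLR}: a smooth $R$-group scheme of finite type is the Néron model of its generic fibre precisely when the restriction map $G(R^{\mathrm{sh}})\to G(K^{\mathrm{sh}})$ is bijective. Since every object in sight is compatible with étale base change and Néron models descend from $R^{\mathrm{sh}}$ to $R$ by \cite[6.5/3]{BLR}, I would argue throughout over the strict Henselization. First I would identify the generic fibre: over $K$ the twisted curve is just the smooth curve $C_K$, so $\Pic^{\toto}$ restricts to $\Pic^{\zer}(C_K)=\Pic^{0}(C_K)$ while $\ol E_K$ collapses to the zero section; hence $G_K=\Pic^{\zer}(C_K)$ is the Jacobian, as required.

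Next I would establish the structural properties. Finite type is exactly Proposition \ref{cor:local2}, which computes the finite component group $\Phi_{\bf t}(\Gamma)$ of the special fibre. For smoothness, I would note that $\ol E$, being the scheme-theoretic closure of a generic subscheme inside a functor over a DVR, is automatically $R$-flat, so $\Pic^{\toto}(\sta C^{\ttt})\to G$ is a faithfully flat $\ol E$-torsor; since $\sta C^{\ttt}$ is regular, $\Pic^{\toto}(\sta C^{\ttt})$ is smooth over $R$, and smoothness descends to $G$ along this fppf morphism. For separatedness, the key point is that $\ol E$ is closed in $\Pic^{\toto}(\sta C^{\ttt})$ by construction, so its image, the unit section of $G$, is a closed immersion by fppf descent; a group scheme with closed unit section is separated over $R$.

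It then remains to check bijectivity of $G(R^{\mathrm{sh}})\to G(K^{\mathrm{sh}})$. Because $\ol E_K$ is trivial, the right-hand side is $\Pic^{\toto}(K^{\mathrm{sh}})=\Pic^{\zer}(C_{K^{\mathrm{sh}}})$. For surjectivity I would use that $\sta C^{\ttt}$ is regular, so every line bundle on the generic fibre extends to $\sta C^{\ttt}_{R^{\mathrm{sh}}}$ with total degree automatically preserved (total degree is locally constant in the flat proper family); thus $\Pic^{\toto}(\sta C^{\ttt})(R^{\mathrm{sh}})\to \Pic^{\toto}(K^{\mathrm{sh}})$ is surjective, and composing with the quotient map gives surjectivity of $G(R^{\mathrm{sh}})\to G(K^{\mathrm{sh}})$. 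For injectivity I would invoke the separatedness just proved: a section $s\in G(R^{\mathrm{sh}})$ that is generically zero agrees with the zero section over $K^{\mathrm{sh}}$, hence agrees with it over all of $\Spec R^{\mathrm{sh}}$ by the valuative criterion of separatedness. This verifies the criterion and identifies $G$ with the Néron model, after which descent to $R$ concludes.

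The main obstacle is not in this assembly but in the finite-type step, where one must control the component group of the non-quasicompact functor $\Pic^{\toto}(\sta C^{\ttt})$; it is precisely the geometric description of $\ol E$ by divisors supported on the special fibre (Proposition \ref{pro:pictot0decomp}) that keeps the quotient of finite type while still absorbing every generically trivial bundle needed for injectivity. The delicate conceptual point is that the same subgroup $\ol E$ must simultaneously be large enough to force injectivity, since it must contain every $\Ocal(D)$ with $D$ on the special fibre, and small enough to leave finitely many components; the compatibility of these two demands is exactly what the preceding propositions guarantee.
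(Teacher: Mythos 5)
Your proposal is correct and follows essentially the same route as the paper: work over the strict Henselization and descend via \cite[6.5/3]{BLR}, obtain finite type from Proposition \ref{cor:local2}, and verify the criterion \cite[7.1/1]{BLR} by using regularity of $\sta C^{\ttt}$ to extend line bundles (surjectivity) and the quotient by $\ol E$ to kill generically trivial classes (injectivity). The only difference is that you spell out the smoothness and separatedness hypotheses of the criterion --- deriving separatedness from closedness of $\ol E$ via fppf descent and then injectivity from separatedness --- details the paper leaves implicit; this is a refinement of the same argument, not a different approach.
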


\begin{proof}
Write $G$ for ${\Pic}^{\toto}(\sta C^{\ttt})/\ol E$. 
The proposition above implies that $G$ is of finite type.
Then, we can apply \cite[7.1/1]{BLR} and conclude that $G$ is a N\'eron model of 
its generic fibre by \cite[7.1/1]{BLR}. Indeed 
$G(R^{\textrm sh})\to G(K^{\textrm sh})$ is bijective (see 
\cite[7.1/1]{BLR}). 
\end{proof}

\subsection{N\'eron model of $\Pic^{\zer}$ via 
$\Pic^{\zer}$}
\label{sect:nerdescent}

We introduce  the subgroup ${\Pic}^{\zer,\ell}$ 
of the group scheme $\Pic^{\zer}(\sta C^{\ttt}(\ell)/R_\ell)$. 
Then we show that it 
singles out within $\Pic^{\zer}(\sta C^{\ttt}(\ell)/R_\ell)$ the 
part of the Picard functor that 
descends on $R$ and, when $\ell$ is conveniently chosen, 
coincides over $R$ with 
the \ner model of the Jacobian of $C_K$. 

\begin{defn}\label{defn:gal-invariant}
We denote by ${\Pic}^{\zer,\ell}_{R_\ell}$ 
the complement  within $\Pic^{\zer}(\sta C^{\ttt}(\ell)/R_\ell)$
of the connected components of the special fibre 
which are not fixed by $\pmmu_\ell=\Gal(K',K)$. 
\end{defn}

\begin{rem} The restriction of the 
map $m\colon \Pic^{\zer}\times_{R_\ell} \Pic^{\zer}\to \Pic^{\zer}$
to 
$\Pic^{\zer, \ell}\times_{R_\ell} \Pic^{\zer,\ell}$ 
factors through $\Pic^{\zer,\ell}$. 
Indeed, the pre-image of the connected components of the 
special fibre which are not fixed by $\pmmu_\ell$
lie within the special fibre of $\Pic^{\zer}\times_{R_\ell} \Pic^{\zer}$
and does not intersect 
the special fibre $\Pic^{\zer,\ell}_k \times_{k} \Pic^{\zer,\ell}_k$
because the tensor product of $\pmmu_\ell$-fixed line bundles is again a 
$\pmmu_\ell$-fixed line bundle. 
Therefore 
${\Pic}^{\zer,\ell}_{R_\ell}$
is an open sub-group scheme 
of $\Pic^{\zer}(\sta C^{\ttt}(\ell)/R_\ell)$.
\end{rem}


%
%

\begin{thm}\label{thm:local1}
Let $C_R$ be a stable curve, with smooth generic fibre, 
over a complete 
discrete valuation ring $R$. 
Then,  for any  multiple $\ell$ of the exponent of 
$\mathcal K_{\bf t}(\Gamma)$, the group scheme 
$\Pic^{0,\ell}_{R_\ell}$ descends to $\Pic^{0,\ell}_R$ 
over $R$ and 
the \ner model of the Jacobian $\Pic^{\zer}(C_K)=\Pic^{\zer,\ell}_K$
satisfies   
$$\nerosta(\Pic^{\zer,\ell}_K)={\Pic}^{\zer,\ell}_R.$$
\end{thm}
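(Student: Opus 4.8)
The plan is to descend the group scheme $\Pic^{\zer,\ell}_{R_\ell}$ from $R_\ell$ to $R$ and then verify the N\'eron property over $R$ using the standard criterion \cite[7.1/1]{BLR}. I would proceed in three stages.

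\medskip

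\noindent\textbf{Stage 1: Descent along $\pmmu_\ell=\Gal(K_\ell,K)$.} The extension $R\subseteq R_\ell$ is finite and (tamely) Galois with group $\pmmu_\ell$, and by construction the $\pmmu_\ell$-action on $\sta C^\ttt(\ell)$ described in \S\ref{sect:ghosto} covers the natural action on $\Spec R_\ell$. This induces a $\pmmu_\ell$-equivariant structure on $\Pic^{\zer}(\sta C^\ttt(\ell)/R_\ell)$, hence on $\Pic^{\zer,\ell}_{R_\ell}$ once I check that the latter is stable under the action --- and it is, precisely because $\Pic^{\zer,\ell}_{R_\ell}$ is defined in Definition \ref{defn:gal-invariant} by removing the connected components of the special fibre \emph{not} fixed by $\pmmu_\ell$, so the complement is visibly $\pmmu_\ell$-invariant. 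To descend the scheme itself (not merely the $K$-points) I would invoke effectivity of Galois descent for the quasi-projective, or at least quasi-affine, $R_\ell$-group scheme $\Pic^{\zer,\ell}_{R_\ell}$ equipped with this descent datum; since $R_\ell/R$ is finite \'etale after inverting the residual characteristic (which we assume prime to $\ell$), descent is effective and yields an $R$-group scheme $\Pic^{\zer,\ell}_R$ with $\Pic^{\zer,\ell}_R\otimes_R R_\ell\cong\Pic^{\zer,\ell}_{R_\ell}$.

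\medskip

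\noindent\textbf{Stage 2: Finite type and smoothness over $R$.} Finiteness of the component group of the special fibre follows from the combinatorial description: by Proposition \ref{pro:pic0decomp} the components of $\Pic^{\zer}(\sta C^\ttt(\ell)/R_\ell)_k$ are indexed by $\ker\partial_{\ell\bf t}$, a finite group, so the $\pmmu_\ell$-fixed locus is a fortiori finite; smoothness over $R_\ell$ descends to smoothness over $R$ along the finite \'etale base change $R_\ell/R$. Thus $\Pic^{\zer,\ell}_R$ is a smooth, separated (by Proposition \ref{pro:separated}, which passes through the open immersion into the separated $\Pic^{\zer}$), finite-type $R$-group scheme whose generic fibre is $\Pic^{\zer}(C_K)$, the Jacobian.

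\medskip

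\noindent\textbf{Stage 3: The N\'eron mapping property.} Here I apply \cite[7.1/1]{BLR}: a smooth separated finite-type $R$-group scheme $G$ with generic fibre an abelian variety is the N\'eron model as soon as $G(R^{\mathrm{sh}})\to G(K^{\mathrm{sh}})$ is bijective. I would verify surjectivity by the valuative/lifting argument: a $K^{\mathrm{sh}}$-point is a degree-zero line bundle on $C_{K^{\mathrm{sh}}}$, which extends over $\sta C^\ttt(\ell)$ after passing to $R_\ell^{\mathrm{sh}}$ to a line bundle of degree zero on every component; the hypothesis that $\ell$ is a multiple of the exponent of $\mathcal K_{\bf t}(\Gamma)$ is exactly what guarantees that the obstruction (the image in the critical group governing the component group) can be absorbed, so that the limiting component is $\pmmu_\ell$-fixed and the point lands in $\Pic^{\zer,\ell}$ rather than merely in $\Pic^{\zer}$. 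Injectivity follows from separatedness together with the usual argument that degree-zero extensions of a trivial generic line bundle are pullbacks from the base, as in Proposition \ref{pro:separated}.

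\medskip

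\noindent\textbf{Main obstacle.} I expect the crux to be Stage 3, specifically pinning down why the exponent condition on $\mathcal K_{\bf t}(\Gamma)$ is the precise numerical threshold making $G(R^{\mathrm{sh}})\to G(K^{\mathrm{sh}})$ surjective \emph{while landing inside the $\pmmu_\ell$-fixed locus}. One must reconcile two different indexing groups --- $\ker\partial_{\ell\bf t}$ governing the components of the full $\Pic^{\zer}$, and $\mathcal K_{\bf t}(\Gamma)$ governing the N\'eron component group via the extension \eqref{eq:allgroups} --- and show that enlarging $\ell$ to kill the exponent of $\mathcal K_{\bf t}(\Gamma)$ exactly trivializes the Galois action on the relevant components. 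The comparison with Theorem \ref{thm:local2} (the Raynaud-style presentation $\nerosta(\Pic^{\zer}_K)=\Pic^\toto(\sta C^\ttt)/\ol E$) should provide the bridge: identifying $\Pic^{\zer,\ell}_R$ with that quotient, via the character/multidegree dictionary of Proposition \ref{pro:pictot0decomp}, is what ultimately forces the equality $\nerosta(\Pic^{\zer,\ell}_K)=\Pic^{\zer,\ell}_R$.
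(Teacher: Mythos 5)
Your Stage 1 rests on a false premise: $R_\ell=R[\wt\pi]/(\wt\pi^\ell-\pi)$ is a \emph{totally ramified} extension of $R$, not an \'etale one (its special fibre is $\Spec k[\wt\pi]/(\wt\pi^\ell)$; inverting the residue characteristic changes nothing, since the ramification is at $\pi$, not at $\ell$). Consequently Galois/\'etale descent does not apply: $\Spec(R_\ell\otimes_R R_\ell)$ is not a disjoint union of copies of $\Spec R_\ell$, so a $\pmmu_\ell$-equivariant structure covering the Galois action is strictly weaker than an fppf descent datum and does not by itself produce an $R$-model. Already for modules this fails: over $R=k[[t]]$ and $R_2=k[[s]]$ with $s^2=t$, the free rank-one module $R_2$ with the sign-twisted action $f(s)\mapsto -f(-s)$ is $\ZZ/2$-equivariant but descends to no $R$-module, since an equivariant isomorphism with the standard structure would be multiplication by an odd, hence non-unit, power series. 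This is not a slip in wording that can be patched: the existence of the descended object is essentially the content of the theorem, and the paper obtains it in the opposite direction --- the N\'eron model over $R$ is already in hand from Theorem \ref{thm:local2} (Raynaud-style, proved via \cite[7.1/1]{BLR} applied to ${\Pic}^{\toto}(\sta C^{\ttt})/\ol E$), and one then proves that its pullback to $R_\ell$ is isomorphic to $\Pic^{\zer,\ell}_{R_\ell}$; that isomorphism is what ``descends'' means in the statement.

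Second, the step where the exponent hypothesis must do its work --- your Stage 3 surjectivity, equivalently the matching of component groups --- is asserted rather than proved. The paper's proof has a genuine combinatorial core, Lemma \ref{lem:componentgroups}: for $\ell$ a multiple of the exponent of $\mathcal K_{\bf t}(\Gamma)$ there is an exact sequence identifying $\Phi_{\bf t}(\Gamma)=\wt\partial^{\bf t} C^1(\Gamma;\ZZ)/\wt\partial^{\bf t}\delta C^0(\Gamma;\ZZ)$ (the N\'eron component group of Proposition \ref{cor:local2}) with $\ker\partial_{\ell\bf t}\cap \im\delta_{\GG_m}$ (the index set of the $\pmmu_\ell$-fixed components of $\Pic^{\zer}(\sta C^{\ttt}(\ell)/R_\ell)_k$), and this isomorphism is realized geometrically by the explicit twists $L\mapsto L\otimes\Ocal\left(-\textstyle{\sum_{v}b(v)\sta C_v}\right)$, compatibly with the gluing along the generic fibres. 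Your ``Main obstacle'' paragraph correctly names this as the crux and even points to Theorem \ref{thm:local2} and Proposition \ref{pro:pictot0decomp} as the bridge --- which is exactly the paper's route --- but the proposal never constructs the map $\iota$, nor proves it is well defined (this is precisely where one uses that $\partial^{\bf t}\ul c$ lies in $\partial^{\ell\bf t}\delta C^0(\Gamma;\QQ)$ for every $\ul c$, i.e.\ the exponent hypothesis), injective, and surjective. As written, both the descent and the N\'eron property rest on this unproved identification, so the argument has a gap at its central point.
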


\begin{proof}
Following \cite{Caponero}, over $R_\ell$, we may write 
the pull-back of $\nero(\Pic^{\zer,\ell}_K)$ as 
$$\nero(\Pic^{\zer,\ell}_K)\otimes R_\ell= 
({\Pic}^{\toto}(\sta C^{\ttt})/\ol E)\otimes R_\ell=
\frac{\bigsqcup_{(\ul d,\ul a)} \Pic^{\ul d}_{\ul a} 
(\sta C^\ttt(\ell)/R_\ell)}{\sim K},
$$
where $\sqcup$ denotes a scheme-theoretic 
disjoint union over the indices $({\ul d},{\ul a})\in 
\Phi_{\bf t}(\Gamma)$
 and ``$\sim K$'' denotes the identification along 
the isomorphic generic fibres. 

We can rewrite the above group scheme by letting the 
the indices $(\ul d,\ul a)$ vary in 
$\wt \partial^{\bf t} C^1(\Gamma;\ZZ)$
and by modding out the action of ${\ul s}
\in  C^0(\Gamma;\ZZ)$. 
Indeed any $\ZZ$-valued cochain $\ul s$ can be 
identified to the divisor 
$\sum_{v\in V} \ell s(v) \sta C_v$
in $\sta C^{\ttt}(\ell)$.
Then we have the isomorphism 
\begin{align*}
\eta_{\ul d,\ul a}\colon \Pic^{\ul d}_{\ul a}&\rightarrow
\Pic^{\ul d'}_{\ul a'}
(\sta C^\ttt(\ell)/R_\ell)\\
L&\mapsto L\otimes \Ocal\left(\textstyle{\sum_{v\in V}\ell s(v) \sta C_v}\right),\end{align*}  
where 
$\ul d'=\ul d+\partial^{\ell \bf t}\delta \ell \ul s$ and 
$\ul a'=\ul a+q_{\ell\bf t}\delta \ell \ul s$.
We write 
$$\nero(\Pic^{\zer,\ell}_K)\otimes R_\ell= 
\frac{\left[\bigsqcup_{(\ul d,\ul a)\in\wt \partial^{\bf t} C^1} 
\Pic^{\ul d}_{\ul a} 
(\sta C^\ttt(\ell)/R_\ell)\right]_{C^0(\Gamma;\ZZ)}}{\sim K}.
$$

We recall from \S\ref{sect:action} 
that any promitive $\ell$th root of unity $\zeta$
maps $\ker \partial_{\ell\bf t}$ to $C^1(\Gamma;\GG_m)$. 
We write $\ker\partial_{\ell\bf t}\cap \im\delta_{\GG_m}$
for the elements lying in the image (this is does not depend on
the chosen primitive root $\zeta$ in $\pmmu_\ell$).
Then, the group scheme $\Pic^{\zer, \ell}_{R_\ell}$ may be
written as 
$$\Pic^{\zer, \ell}_{R_\ell}= 
\frac{\bigsqcup_{\ul a} \Pic^{\ul 0}_{\ul a} 
(\sta C^\ttt(\ell)/R_\ell)}{\sim K}.
$$
where the scheme-theoretic disjoint union runs over 
$\ul a \in \ker\partial_{\ell\bf t}\cap \im\delta_{\GG_m}$.

In order to show the 
isomorphism between the above group schemes over $R_\ell$ 
we need to use 
the condition that $\ell$ is a multiple of the exponent of 
$\mathcal K_{\bf t}(\Gamma)$ and the following lemma. 
\begin{lem}\label{lem:componentgroups}
If $\ell$ is the exponent of $\mathcal K_{\bf t}(\Gamma)$, we have
an exact sequence 
$$0\to \wt\partial^{\bf t} \delta C^0(\Gamma;\ZZ)\to 
\wt\partial ^{\bf t} C^1(\Gamma;\ZZ) \xrightarrow{\iota}  
\ker\partial_{\ell\bf t}\cap \im\delta_{\GG_m}\to 0.$$
where $\iota$ maps $\wt\partial ^{\bf t} \ul c\in \wt\partial ^{\bf t} C^1(\Gamma;\ZZ)z$ 
to $\ell \ul c-\delta \ul b$, where $\ul b$ is the unique element of 
$\delta C^0(\Gamma;\ZZ)\subseteq C^1(\Gamma;\QQ)$ 
satisfying 
$\partial^{ \bf t}\ul c =\partial^{\ell \bf t}\delta \ul b$
in $C^0(\Gamma;\QQ)$.
\end{lem}
\begin{proof}
Indeed for any element  
$(\partial^{\bf t} {\ul c}, q_{\bf t}{\ul c})
\in \wt\partial^{\bf t} C^1$ 
there exists a unique  
$\delta {\ul b}\in C^1(\Gamma;\QQ)$ 
such that 
$\partial^{ \bf t}\ul c =\partial^{\ell \bf t}\delta \ul b$.
This happens because $\ell$ is a multiple of the exponent of 
$\mathcal K_{\bf t}(\Gamma)=\partial^{\bf t}C^1/\partial^{\bf t} \delta C^0$ and, therefore, 
$\ell \mathcal K_{\bf t}(\Gamma)=\partial^{\bf t}C^1/\partial^{\ell\bf t} \delta C^0$ vanishes; hence, any 
element of the form $\partial^{\bf t} \ul c$  
lies in $\partial^{\ell\bf t} \delta C^0$.
The uniqueness follows because $\partial^{\ell \bf t}$ is the adjoint of $\delta$.

For $\wt \partial^{\bf t} c=(\ul d, \ul a)$, we set  
$$\iota(\ul d, \ul a) =
\ell \ul c- \delta \ul b\in 
\ker\partial_{\ell\bf t}\cap \im \delta_{\GG_m},$$
and we notice that 
any other choice $\ul c'$ in $C^1(\Gamma;\ZZ)$ lifting 
$(\ul d, \ul a)$ via $\wt \partial^{\bf t}$
yields the same $1$-cochain 
$\ell\ul c'=\ell\ul c$
with values in $\ZZ/\ell t_e$ because the reduction modulo $t_e$ 
of $\ul c$ and $\ul c'$ coincides with $\ul a$. 
Furthermore we have $\partial^{ \bf t}\ul c'=\partial^{ \bf t}\ul c=\ul d$, so 
${\ul c}'$ yields the same element $\delta {\ul b}'=\delta {\ul b}$, uniquely 
determined by the relation $\partial^{ \bf t}\ul c' =
\partial^{\ell \bf t}\delta \ul b'$.
The uniqueness of $\delta \ul b$ also guarantees that $\iota$ is an homomorphism
of groups. 

It is now obvious that $\iota$ vanishes on 
$\wt \partial_{\bf t}\delta C^0$.
Furthermore, 
any element in $\ker\partial_{\ell\bf t}\cap \im \delta_{\GG_m}$ can be written as the sum of a $1$-cochain 
of the form $\ell \ul c$ and a $1$-cochain lying in $\im\delta$.
Here, each co-ordinate is identified up to a multiple 
$\ell t_e d_e\in \ell t_e\ZZ$; hence, 
we can represent each element of $\ker\partial_{\ell\bf t}\cap \im \delta_{\GG_m}$
as the image of 
$\ul c - (t_ed_e)_{e\in E}$ via $\iota$. 

Now $\ker \iota$ equals $\wt\partial^{\bf t} \delta C^0(\Gamma;\ZZ)$. 
Consider $\iota(\wt\partial^{\ell\bf t} \ul c)=
\ell \ul c -\delta \ul b\in \ker\partial^{\ell t}$.
If this $1$-cochain is congruent to $\ul 0$ modulo $\ell t_e$ 
on each vertex $e$, then we can assume 
$\ell {\ul c} -\delta {\ul b}=\ell (t_ed_e)_{e\in E}$
and this implies that $b$, whose image via $\delta$ lies in $C^1(\Gamma,\ell\ZZ)$,
can be chosen in $C^0(\Gamma,\ell\ZZ)$. Then, the conditions 
$\partial^{\ell \bf t}\ell {\ul c} -\delta{ \ul b}=0$
and  
$\ell {\ul c} -\delta {\ul b}=\ell (t_ed_e)_{e\in E}$
imply 
$\partial^{\bf t} \ul c=\partial^{\bf t}\delta ( \ul b/\ell)$ and 
$q_{\bf t} {\ul c}=q_{\bf t} \delta (\ul b/\ell)$, respectively; 
in other words, 
we have
$\wt \partial^{\bf t}\delta( \ul b/\ell)=\wt\partial^{\bf t} \ul c.$
\end{proof}

For each $(\ul d, \ul a)\in \wt \partial ^{\bf t}C^1(\Gamma;\ZZ)$ 
we consider $\ul m=\iota (\ul d,\ul a)$ and the isomorphism 
\begin{align*}
\pi_{\ul d ,\ul a}\colon \Pic^{\ul d}_{\ul a}&\rightarrow
\Pic^{\ul 0}_{\ul m}
(\sta C^\ttt(\ell)/R_\ell)\\
L&\mapsto L\otimes \Ocal\left(\textstyle{\sum_{v\in V}- b(v) \sta C_v}\right).
\end{align*}
 where $\ul b$ is the unique element of 
$\delta C^0(\Gamma;\ZZ)\subseteq C^1(\Gamma;\QQ)$ 
satisfying 
$\partial^{ \bf t}\ul c =\partial^{\ell \bf t}\delta \ul b$
in $C^0(\Gamma;\QQ)$.
Notice that for any ${\ul e}\in C^0(\Gamma;\ZZ)$ 
we have 
$\pi_{\ul d',\ul a'}\circ \eta_{\ul d,\ul a}=
 \pi_{\ul d,\ul a}$.
This yields the isomorphism of group schemes 
$$\nero(\Pic^{\zer,\ell}_K)\otimes R_\ell\cong \Pic^{\zer,\ell}_{R_\ell}.$$
We conclude that 
$\Pic^{\zer,\ell}_{R_\ell}$ descends to 
$\nero(\Pic^{\zer,\ell}_K)$.
\end{proof}

\section{The component group of the special fibre of $\Pic^{\zer}$}\label{sect:compgroup}
The group of components of the special fibre of 
the N\'eron model of the Jacobian is interesting in 
its own right. It can be regarded as a purely combinatorial object. It is 
the quotient of the group of $0$-cochains $\ul{b}$
with vanishing total value $\varepsilon ({\ul{b}})=\sum_V b_v$
modulo the image of the \emph{Laplacian} $\Delta=\partial \delta$
of any integer valued $0$-cochain (recall that 
$\partial$ is the adjoint of 
$\delta$). It is convenient to express $\partial, \delta$ and $\Delta$ 
in terms of matrices.
Once we fix an orientation $\partial$ is the 
incidence matrix $I=(i_{e,v})$ with $\#V$ rows and $\#E$ columns: 
the entry $i_{v,e}$ equals $\pm1$
when $v=e_\pm$ and vanishes otherwise. Then $\delta$ is given by the transposed matrix 
$I^{\mathsf T}$ and the Laplacian is given by $M=-I\,I^{\mathsf T}$.
The quotient $\ker \varepsilon/ \im(\Delta)$ is the critical group $\mathcal K(\Gamma)$. 

{Sandpile} dynamics offers a different point of view. 
Let $\chi_v$ be the characteristic function attached to a vertex: $\chi_v(w)=0$ if $w\neq v$ 
and $\chi_v(v)=1$. 
Summing a 
$0$-cochain $\partial \delta \chi_v$ to an integer valued 
$0$-cochain $\ul b$
produces the only $0$-cochain whose total number of values equals $\sum_V b_v$ 
and whose value on 
 a vertex $w\neq v$ is the same as $b_w$ plus 
 the number of oriented edges going from 
$v$ to $w$. This relates the group of components of  the special 
fibre of the N\'eron model to the dynamics of \emph{abelian sandpiles}
which studies the 
collection of indistinguishable chips distributed among the vertices of
a graph. In this model, a 
vertex $v$ topples when it sends one chip to each neighbouring vertex; this 
is described by the above operation 
$\ul b\rightsquigarrow \ul b +\partial \delta \chi_v$. 
This justifies the terminology sandpile or chip-firing. In this context 
the group 
$\partial C^1(\Gamma;\ZZ)/\partial \delta C^0(\Gamma;\ZZ)$ is referred to 
as critical group.

The group is also analogous to the algebro-gemetric notion 
\emph{Picard} group $\Pic^{\toto}$ 
of an algebraic curve. We refer to \cite{BHN} and \cite{BN} and 
where $\partial C^1(\Gamma;\ZZ)$ is regarded as a group of divisors of 
total degree $0$. Modding out $\partial \delta C^0(\Gamma;\ZZ)$ is 
often described in this context as the analogous as passing to linear 
equivalence classes. For this reasons the critical group $\mathcal K(\Gamma)$ is 
also referred to as the ``Picard group'' of the graph.

In \cite{BHN} we can also find a combinatorial definition of the analogue of the 
\emph{Jacobian} of the curve, regarded analytically as the quotient of 
the dual space of abelian differentials $H^0(C,\omega_C)^\vee$
modulo $H^1(C;\ZZ)\hookrightarrow H^0(C,\omega_C)^\vee$. Indeed, 
we can consider $1$-chains with values in $\QQ$.
Then 
the lattice $H^1(\Gamma;\ZZ)$ is the analogue of $H^1(C;\ZZ)$, whereas 
$H^1(\Gamma;\ZZ)^\#=\{{\ul x} \in H^1(\Gamma;\QQ)\mid \langle \ul x,\ul \lambda\rangle \in 
\ZZ \text{ for all $\la \in H^1(\Gamma;\ZZ)$}\}$
is the analogue of $H^0(C,\omega_C)^\vee$. 
The Jacobian is the quotient 
$$\mathcal J(\Gamma)=\frac{H^1(\Gamma;\ZZ)^\#}{H^1(\Gamma;\ZZ)}.$$

All the above groups are different presentations of the same finite Abelian group
$\mathcal K(\Gamma)$
whose cardinality equals the \emph{complexity} 
$c(\Gamma)$: the number of spanning trees within $\Gamma$. 
These are subgraphs of $\Gamma$ with the same vertex set 
$V$ as $\Gamma$ and of type tree (the Betti number $b$ equals $0$).
The complexity is efficiently computed by Kirchhoff's matrix-tree theorem
$$c(\Gamma)=(-1)^{s+t+\#V-1}\det M_{s,t}$$
where $M_{s,t}$ is obtained by suppressing the $s$th row and the $t$th line. 

The previous results allow us to go through some natural generalisations
of these notion and theorems when each edge $e$ is equipped with a 
thicknesses $t_e$. There are several interesting consequences even for the 
standard case $t_e=1$ for all $e$.

\subsection{The $\bf t$-critical group $\mathcal K_{\bf t}(\Gamma)$}
In the previous sections we encountered the following generalisations of 
the above mentioned objects. The $\bf t$-critical group of $\Gamma$ is 
$$\mathcal K_{\bf t}(\Gamma)=\frac{\partial^{\bf t}C^1(\Gamma;\ZZ)}
{\partial^{\bf t}\delta C^0(\Gamma;\ZZ)}.$$
Notice how modding our ${\partial^{\bf t}\delta C^0(\Gamma;\ZZ)}$ is 
the analogue for thick edges of the operation defining the equivalence between abelian 
sandpiles. In the present model one should think of  fractional sandpiles, where 
$v$ topples when it sends a fractional chip $1/t_e$ to the neighbouring vertex
at the opposite extreme of the edge $e$. 

\subsection{The group of components $\Phi_{\bf t}(\Gamma)$} 
The group of components is 
$$\Phi_{\bf t}(\Gamma)=\frac{\wt\partial^{\bf t}C^1(\Gamma;\ZZ)}
{\wt\partial^{\bf t}\delta C^0(\Gamma;\ZZ)}$$
and, by Theorem \ref{thm:local1} is indeed the group of components of the special fibre of the 
N\'eron model of the Jacobian of a smooth 
curve whose stable model has thicknesses 
$\bf t$. 
The group of characters is 
$\ker \partial_{\bf t}$
is the kernel of $\Phi_{\bf t}(\Gamma)\to \mathcal K_{\bf t}(\Gamma)$.

\subsection{The Jacobian $\mathcal J_{\bf t}(\Gamma)$ of a graph}
The group of characters attached to the 
thicknesses $\ell \bf t$ contain a 
copy of the group of components $\Phi_{\bf t}(\Gamma)$ for suitable choices of $\ell$
(the multiples $\ell$ of the exponent of the $\bf t$-critical group).
The subgroup isomorphic to $\Phi_{\bf t}(\Gamma)$ is the preimage of  
$\im \delta_\ell$ under the reduction modulo $t_e$ of all values at each edge $e$. 
Generalising the above notion of Jacobian of a graph $\Gamma$, 
we can present the group $\lim_\ell 
(\ker \partial_{\ell \bf t}\cap \im\delta_\ell)$
as an extension 
$$0\to \Lambda_{\bf t} \to \Lambda_{\bf t}^\#\xrightarrow{ \mod t_e\ZZ\  \ }\lim_\ell 
(\ker \partial_{\ell \bf t}\cap \im\delta_\ell)\to 0,$$
where $\Lambda$ is the lattice within $\ker\partial_{\bf t}$ whose 
values at the edge $e$ lie in $t_e\ZZ$
(as above we have $\Lambda^\#=\{{\ul x} \in \ker\partial_{\bf t}\mid \langle \ul x,\ul \lambda\rangle \in 
\ZZ \text{ for all $\la \in \Lambda$}\}$).
We set the notation
$$\mathcal J_{\bf t}(\Gamma)=\lim_\ell\left(\ker \partial_{\ell \bf t}\cap \im\delta_\ell\right)=
\frac{\Lambda_{\bf t}^\#}{\Lambda_{\bf t}}.$$

\subsection{Abel theorem for a graph with thickness $\bf t$} \label{sect:Abel}
Lemma \ref{lem:componentgroups} may be regarded as Abel's theorem for 
a graph with thicknesses $\bf t$. Indeed we have an isomorphism between the 
$\bf t$-critical group (playing the role of a sort of Picard group) and 
the Jacobian $\mathcal J_{\bf t}(\Gamma)$ of the graph:
$$\Phi_{\bf t}(\Gamma)\cong \mathcal J_{\bf t}(\Gamma).$$
Notice how this provides a somewhat different proof even 
in the known case ${\bf t}=\bf 1$. 

\subsection{Complexity for a graph with thickness $\bf t$}
For ${\bf t}={\bf 1}$, the complexity counts the number of elements of the critical group, 
which coincides with the group of components. In the general setup the 
complexity admits a natural generalisation yielding a positive integer counting 
the number of elements of the group of components. 

Let $\mathcal S$ be the set of spanning trees of $\Gamma$. Each element of $\Gamma$ 
is determined by a subset of $E$ of cardinality $\#V-1$ (this happens because 
the Betti number $b$ vanishes if and only if $1-\#V+\#E$ vanishes). 
We can define the complexity $c_{\bf t}(\Gamma)$ as the following weighted sum 
$$c_{\bf t}(\Gamma)=\sum_{S\in \mathcal S} \prod_{e\not \in S} t_e,$$
where $e \not \in s$ means that $e$ is not one of the $\#V-1$ edges of $S$ (regarded 
as a set).
Notice that any spanning tree of $\Gamma$ which does not contain 
the edges $e_1,\dots, e_r$ yields $t_{e_1}\cdot\dots\cdot t_{e_r}$ spanning trees 
of the ``blown up'' graph $\mathrm{Bl}_{\bf t}\Gamma$ obtained by subdividing the edge $e$ into $t_{e_i}$ edges. 
This is the graph of the special fibre of the regular semi-stable model. 
We have 
$c_{\bf t}(\Gamma)=c(\mathrm{Bl}_{\bf t}\Gamma)=|\Phi(\mathrm{Bl}_{\bf t}\Gamma)|.$
On the other hand, by Theorem \ref{thm:local1},
$\Phi_{\bf k}(\Gamma)$ is isomorphic to $\Phi(\mathrm{Bl}_{\bf t}\Gamma)$; hence we have 
\begin{equation}\label{eq:retombee}
c_{\bf t}(\Gamma)=|\Phi_{\bf t}( \Gamma)|=|\Phi(\mathrm{Bl}_{\bf t}\Gamma)|=
c(\mathrm{Bl}_{\bf t}\Gamma),
\end{equation}	
which can be used to simplify the computation of ordinary graphs after contraction of all 
subchains.
Now, the size of the $\bf t$-critical group $\mathcal K_{\bf t}(\Gamma)$ can 
be computed via \eqref{eq:ker_t}
\begin{equation}\label{eq:ordcrit}
|\Kcal_{\bf t}(\wt \Gamma)|=c_{\bf t}(\Gamma)\prod_p \prod_{l=0}^{\max_p} p^{-b_{p,l}}.\end{equation}
Notice that this is also a suitable index $\ell$ in Theorem \ref{thm:local2}. 

\subsection{Kirchhoff's matrix-tree theorem for graph with thickness $\bf t$}
Kirchhoff theorem matrix-tree theorem extends straightforwardly as follows.
\begin{thm}\label{thm:Kirchhoff}
We have 
$$c_{\bf t}(\Gamma)=(-1)^{a+b+\#V-1}\det M_{a,b}\prod_{e\in E} t_e,$$
where $M$ is the matrix representing the Laplacian $\partial^{\bf t}\delta$ 
as a above and 
$M_{a,b}$ is the sub-matrix obtained by suppressing the $a$th row and the $b$th column. 
\end{thm}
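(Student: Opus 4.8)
The plan is to reduce the assertion to a Cauchy--Binet computation, following the matrix conventions of the unweighted discussion above. First I would write the Laplacian in matrix form. Fixing an orientation, let $I$ be the incidence matrix of $\Gamma$ (rows indexed by $V$, columns by $E$), so that $\delta=I^{\mathsf T}$, and set $D=\operatorname{diag}(t_e)_{e\in E}$. By \eqref{eq:partialonQ} the operator $\partial^{\bf t}$ acts by $e\mapsto \frac1{t_e}(e_+-e_-)$, so that, adopting the sign convention of the unweighted Laplacian $M=-I\,I^{\mathsf T}$ recalled above, the matrix of $\partial^{\bf t}\delta$ is $M=-I D^{-1} I^{\mathsf T}$. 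In particular $M$ is symmetric, and since $\partial^{\bf t}$ is the adjoint of $\delta$ and the $1$-cochain pairing is positive definite, one has $Mv=0\Leftrightarrow \delta v=0$; as $\Gamma$ is connected, $\ker M=\ker\delta$ is spanned by the constant $0$-cochain $\mathbf 1$, so $\rk M=\#V-1$ and $\det M=0$.

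Next I would treat the diagonal case $a=b$. Deleting row $a$ from $I$ gives an $(\#V-1)\times\#E$ matrix $I_0$ with $M_{a,a}=-I_0 D^{-1} I_0^{\mathsf T}$, and Cauchy--Binet applied to $I_0 D^{-1} I_0^{\mathsf T}$ (with $D^{-1}$ diagonal) yields
$$\det\!\big(I_0 D^{-1} I_0^{\mathsf T}\big)=\sum_{\substack{S\subseteq E\\ \#S=\#V-1}} \big(\det I_0[S]\big)^2 \prod_{e\in S} \frac1{t_e},$$
where $I_0[S]$ denotes the square submatrix on the columns $S$. Total unimodularity of the incidence matrix gives $\det I_0[S]=\pm1$ exactly when $S$ is a spanning tree and $0$ otherwise, so only $S\in\mathcal S$ contribute, each with weight $\prod_{e\in S}t_e^{-1}$. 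Multiplying by $\prod_{e\in E}t_e$ and comparing with the definition of $c_{\bf t}(\Gamma)=\sum_{S}\prod_{e\notin S}t_e$ gives $\det(I_0 D^{-1} I_0^{\mathsf T})\prod_{e\in E}t_e=c_{\bf t}(\Gamma)$, hence $\det M_{a,a}=(-1)^{\#V-1}c_{\bf t}(\Gamma)/\prod_{e\in E}t_e$, which is the claimed identity for $a=b$.

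Finally I would deduce the general case from a rank-one adjugate argument. Since $\det M=0$ we have $M\cdot\operatorname{adj}(M)=0$, so every column of $\operatorname{adj}(M)$ lies in $\ker M=\langle\mathbf 1\rangle$; by symmetry of $M$ the rows do too, forcing $\operatorname{adj}(M)=\lambda\,\mathbf 1\mathbf 1^{\mathsf T}$ for a single scalar $\lambda$, i.e. all entries of $\operatorname{adj}(M)$ coincide. The $(b,a)$ entry of $\operatorname{adj}(M)$ is the cofactor $(-1)^{a+b}\det M_{a,b}$, so $(-1)^{a+b}\det M_{a,b}$ is independent of $a,b$; evaluating at $a=b$ via the previous step pins this common value to $(-1)^{\#V-1}c_{\bf t}(\Gamma)/\prod_{e\in E}t_e$, which rearranges to $c_{\bf t}(\Gamma)=(-1)^{a+b+\#V-1}\det M_{a,b}\prod_{e\in E}t_e$.

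I expect the only delicate points to be the sign bookkeeping — keeping the global minus in $M=-ID^{-1}I^{\mathsf T}$ consistent throughout, so that the exponent $(-1)^{a+b+\#V-1}$ emerges correctly — together with the verification that $\det I_0[S]=\pm1$ precisely on spanning trees. An alternative route would bypass Cauchy--Binet by invoking \eqref{eq:retombee}, applying the ordinary Kirchhoff theorem to the blow-up $\mathrm{Bl}_{\bf t}\Gamma$, and then eliminating its degree-two subdivision vertices by a Schur-complement reduction; this is essentially equivalent but trades the unimodularity input for a heavier linear-algebra step, so I would keep the direct Cauchy--Binet argument as the main line.
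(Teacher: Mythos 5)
Your proof is correct and takes essentially the same route as the paper's: both fix an orientation, write the Laplacian as $M=-I_{\bf t}I^{\mathsf T}$ (your $-ID^{-1}I^{\mathsf T}$), reduce to the diagonal minor $a=b$, and evaluate $\det M_{a,a}$ by Cauchy--Binet combined with unimodularity of the incidence-matrix minors (nonzero exactly on spanning trees). The only difference is that you justify the reduction to $a=b$ by the adjugate/kernel argument, a step the paper simply asserts (``It is enough to prove the identity for $a=b$''), so your write-up is if anything slightly more complete.
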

\begin{proof}
Let $I_{\bf t}$ be the incidence matrix of the graph with thicknesses. 
It is given by multiplying by $1/t_e$ the column corresponding to $e$.
Notice that $I_{\bf t}$ represents $\partial^{\bf t}$, $\delta$ is still represented 
by $I^{\mathsf T}$, and the Laplacian $\partial^{\bf t}\delta$ is represented by 
$M=-I_{\bf t}I^{\mathsf T}.$

It is enough to prove the identity for $a=b$.
Let 
 $ I^\star$ and $I_{\bf t}^\star$ be the sub-matrices of the 
incidence matrix $I$ and  $I_{\bf t}$ 
obtained by deleting the row $a$. 
Notice that $M_{a,a}= -I^{\star } (I^\star)^{\mathsf T}$.
We have 
$$c_{\bf t}(\Gamma)
=\sum_{S\in \mathcal S} \prod_{e\not \in S} t_e=
\sum_{S\in \mathcal S} (\det I(S))^2 \prod_{e\not \in S} t_e,$$
where $I(S)$ (resp. $I_{\bf t}(S)$) 
is the square sub-matrix of $I^\star$ determined by 
suppressing all columns attached to the vertices $e\not\in S$.
Notice that $\det I(S)= \det I_{\bf t}(S)\prod_{e\in S} t_e.$ Then we have 
\begin{multline*}c_{\bf t}(\Gamma)=
\sum_{S\in \mathcal S} \det I_{\bf t}(S)\det I(S)\prod_{e\in E} t_e =
\sum_{\substack{{S\subseteq E}\\{|S|=\#V-1}}} 
\det I_{\bf t}(S)\det I(S)  \prod_{e\in E} t_e\\=
(-1)^{\#V-1}\det M_{a,a}\prod_{e\in E} t_e,\end{multline*}
where, in the last identity,  
we used the Binet--Cauchy formula. 
\end{proof}

\section{A universal group scheme}\label{sect:nerglobal}
\subsection{The stack of $\ell$-stable curves}
Let $\Mbar_g{}^{\!\!\!\ell}$ be the 
stack of twisted curves with all stabilisers of order $\ell$
and stable moduli space. We call its objects $\ell$-stable curves 
because the condition imposing the same order on the stabilisers is indeed a stability 
condition (in the sense that a unique stable reduction exists).
Every stable dual graph $\Gamma$ determines a locally closed 
substack $\Mcal_\Gamma^\ell$, the full subcategory of $\ell$-stable curves whose dual 
graph is $\Gamma$. 

We have a stratification 
$$\Mbargl=\bigsqcup_\Gamma \Mcal_\Gamma^\ell,$$
where $\Gamma$ runs over all stable genus-$g$ graphs  and 
$\Mcal_\Gamma^\ell$ is in the closure of $\Mcal_{\Gamma'}^\ell$ if 
$\Gamma'$ can be obtained from $\Gamma$ via edge-contraction. 
Consider the analogue stratification $\Mcal_\Gamma\subset \Mbar_{g}$ within 
the Deligne--Mumford stack of stable curves; 
then the forgetful morphism $\Mcal_\Gamma^\ell\to \Mcal_\Gamma$ is 
a constant $(\pmmu_\ell)^E$-gerbe; in other words  
the automorphism group $\Aut_{C}(\sta C)$ 
classifying ghost automorphisms of $\sta C$ that fix the coarse space 
is canonically isomorphic to 
$(\pmmu_\ell)^E$ for any 
$\ell$-stable curve $\sta C$ with coarse space $C$
and the dual graph $\Gamma$ , see 
\cite[\S7]{ACV}. The  ghost automorphism group scheme, which is trivial
over  
$\Mcal_\Gamma^\ell$, contains the distinguished subgroup scheme $\Delta_\Gamma$
of diagonal automorphisms. This consists of the 
ghost automorphisms $\zeta\in \pmmu_\ell$ 
acting simultaneously at all nodes $[\{xy=0\}/\pmmu_\ell]$ 
as $\zeta\cdot(x,y)=(\zeta x,y)\equiv (x,\zeta y)$.
We have $\Delta_\Gamma=\pmmu_\ell$ over $\Mcal_\Gamma^\ell$.

\subsection{The Picard group scheme over $\Mbargl$}
Consider the separated group scheme $\Pical_g^{\zer}$ 
representing the functor $\Pic^{\zer}$ over the moduli 
stack $\Mbar_g\upl$ (the proof of the separatedness 
follows almost immediately from Lemma \ref{pro:separated}. 
On each substack $\Mcal_\Gamma^\ell$ we 
consider $\Pical_\Gamma^{\zer}$, which is a disjoint union of 
components $\Pical_{\Gamma,\ul a}^{\zer}$ parametrised by 
$\ul a\in \ker\partial_\ell$
$$\Pical_\Gamma^{\zer}=\bigsqcup_{\ul a\in \ker\partial_\ell} \Pical_{\ul a}^{\zer}.$$
 By \S\ref{sect:action} the 
the components labelled by $\al\in \ker\partial_\ell \cap \im\delta_\ell$ 
are fixed by $\Delta$ whereas in the remaining components 
no point is fixed. 
Consider 
\begin{equation}\label{eq:ghostinvariantpic}
\Pical^{\zer,\ell}_g:=\Pical_g^{\zer}\setminus \bigsqcup_{\Gamma,\ul a \not \in \im\delta_\ell} 
\Pical_{\Gamma,\ul a}^{\zer}.\end{equation}
It is a separated subgroup scheme of $\Pical_g^{\zer}$ (the invariance with respect to 
$\Delta_\Gamma$ is closed under multiplication).

We can now state Theorem \ref{thm:local2} in a global form. 
We recall that a smoothing of $C_k$ is a family $C_R$ over $R$ whose 
generic fibre is smooth and whose special fibre is isomorphic to $C_k$.
The base change via $R\subseteq R_\ell$ 
of $C_R$ is not a smoothing of the generic fibre, but is 
associated to to a unique regular stable twisted model
$\sta C^{\ttt}(\ell)\to \Spec R_\ell$. 
This is an $\ell$-stable curve and we may regard it as a morphism 
$\Spec R_\ell\to \Mbar_g{}^{\!\!\!\ell}$. 
\begin{cor}\label{thm:global} For any stable curve $C_k$ 
and for any of its smoothings $C_R$ over $R$, the \ner model of the Jacobian $\Pic_K$
of the generic fibre coincides, after pullback to $R_\ell$, with the 
base change of $\Pical^{\zer,\ell}_g\to \Mbargl$ via 
$\Spec R_\ell\to \Mbargl$, 
the morphism induced by the regular {{twisted}} model
$\Spec R_\ell\to \Mbar_g{}^{\!\!\!\ell}$ 
associated to 
$C_{R_\ell}$. Summarising, we have the following fibre diagrams
$$
\xymatrix @!0 @R=1cm @C=1.3cm {
 \Pic_K\ar[rr]\ar[dd]    && 
           N(\Pic_K)\ar[dd]  && 
                \Pic^{\zer,\ell}_{R_\ell}\ar[dd] \ar[ll]\ar[rr]&& 
                       \Pical^{\zer,\ell}_g\ar[dd] 
        \\
                             &\square&    &\square&    &\square&  \\
        \Spec K \ar[rr]  && 
           \Spec R && 
              \Spec R_\ell \ar[rr]\ar[ll]&& 
                   \Mbar_g{}^{\!\!\ell}.
        }
$$\qed
\end{cor}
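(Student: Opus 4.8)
The plan is to derive this global statement from the local Theorem~\ref{thm:local1} by realising both the Néron model and the intermediate scheme $\Pic^{\zer,\ell}_{R_\ell}$ as pullbacks of the universal objects $\Mbargl$ and $\Pical^{\zer,\ell}_g$ along the classifying morphism of the regular twisted model. Throughout I take $\ell$ as in Theorem~\ref{thm:local1}, a multiple of the exponent of the critical group, and I work with a transversal smoothing ($\bf t=\bf 1$), so that, as recorded before the statement, $\sta C^{\ttt}(\ell)$ has node stabilisers of order exactly $\ell$ and is genuinely an $\ell$-stable curve.

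First I would construct the classifying morphism and pull back the ambient Picard scheme. Since $\sta C^{\ttt}(\ell)\to\Spec R_\ell$ is $\ell$-stable, the modular interpretation of $\Mbargl$ gives a morphism $\Spec R_\ell\to\Mbargl$ whose pullback of the universal $\ell$-stable curve is $\sta C^{\ttt}(\ell)$ (the generic point landing in $\Mcal_g$, the special point in the stratum $\Mcal_\Gamma^\ell$). Because $\Pical^{\zer}_g$ represents the relative functor $\Pic^{\zer}$ over $\Mbargl$ and representable functors base-change, the pullback of $\Pical^{\zer}_g$ is canonically the group scheme $\Pic^{\zer}(\sta C^{\ttt}(\ell)/R_\ell)$ of the local theory. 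It remains to see that the open subgroup scheme $\Pical^{\zer,\ell}_g$ cut out by \eqref{eq:ghostinvariantpic} restricts to exactly $\Pic^{\zer,\ell}_{R_\ell}$.

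The crux, and the step I expect to be the main obstacle, is to match the two a priori different $\pmmu_\ell$-actions governing these two subschemes. On the universal side $\Pical^{\zer,\ell}_g$ retains, on each stratum $\Mcal_\Gamma^\ell$, the components $\ul a\in\ker\partial_\ell\cap\im\delta_\ell$ fixed by the diagonal ghost subgroup $\Delta_\Gamma=\pmmu_\ell$; on the local side $\Pic^{\zer,\ell}_{R_\ell}$ retains (Definition~\ref{defn:gal-invariant}) the components of the special fibre fixed by the Galois group $\pmmu_\ell=\Gal(K_\ell/K)$. These coincide because, as computed in \S\ref{sect:ghosto}, the Galois automorphism induced by $\wt\pi\mapsto\zeta\wt\pi$ restricts on the special fibre to precisely the diagonal ghost automorphism $\zeta\cdot(z,w)=(\zeta z,w)$; hence the classifying morphism carries the arithmetic $\pmmu_\ell$ to the pullback of the stacky diagonal $\Delta_\Gamma$. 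Under the character identification $\ZZ/\ell=\Hom(\pmmu_\ell,\GG_m)$, the description of fixed loci in \S\ref{sect:action} then shows that both subschemes are labelled by the same set $\ker\partial_\ell\cap\im\delta_\ell$, so the pullback of $\Pical^{\zer,\ell}_g$ is indeed $\Pic^{\zer,\ell}_{R_\ell}$. This secures the rightmost Cartesian square of the diagram.

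Finally I would fill in the two left-hand squares using Theorem~\ref{thm:local1} alone. That theorem gives the descent $\Pic^{\zer,\ell}_{R_\ell}=\Pic^{\zer,\ell}_R\otimes_R R_\ell$ together with $\Pic^{\zer,\ell}_R=\nero(\Pic^{\zer}_K)$; combining these exhibits $\Pic^{\zer,\ell}_{R_\ell}=\nero(\Pic_K)\otimes_R R_\ell$ as the middle square, while restriction to the generic fibre recovers $\Pic_K$ as the generic fibre of the Néron model, giving the leftmost square. Assembling the four squares yields the asserted diagram; everything beyond the action-matching of the previous paragraph is formal, resting only on representability and the already-established local Theorem~\ref{thm:local1}.
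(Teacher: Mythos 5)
Your proof is correct and takes essentially the same route as the paper: the paper offers no separate argument for this corollary, treating it as an immediate globalization of Theorem \ref{thm:local1} via the classifying morphism $\Spec R_\ell\to\Mbargl$ induced by the regular twisted model, and the identification of the Galois action with the diagonal ghost action (\S\ref{sect:ghosto}, \S\ref{sect:action}) is precisely the implicit content you spell out. Your explicit restriction to transversal smoothings (${\bf t}={\bf 1}$) is also the right reading of the statement, since that is exactly the condition under which $\sta C^{\ttt}(\ell)$ is $\ell$-stable so that the classifying morphism to $\Mbargl$ exists at all; the non-transversal case is the one treated separately in Corollary \ref{cor:anothertake}.
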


\subsection{The different approaches of Caporaso and of Holmes}\label{sect:comparisons}
We discuss here two different approaches to the problem of assembling 
into a single universal family the special fibres of the \ner model of the Jacobian
$\Pic^{\zer} C_K$.

Caporaso's approach \cite{Caponero} is a subfunctor of the Picard functor. 
It is given by singling out within the Picard functor the points representing  
the so-called ``balanced'' line bundles which we recall  below. 
As a first step, we switch to the study of the 
variety $\Pic^dC_K$, where $C_K$ is the smooth curve, generic fibre of a stable curve
over $R$. We assume $g\ge 3$ and that $2g-2$ is prime to $d-g+1$, 
which rules out the case of the Jacobian $\Pic^{\zer}C_K$. However 
there will be consequences for the initial problem of \ner models of Jacbians 
in some special cases where we can rely on a trivialization of the torsor 
$\Pic^dC_K\cong \Pic^{\zer}C_K$. 

Let us illustrate first the notion of balanced multidegree. 
In rough terms a line bundle $L$ on a stable curve $C$ is 
balanced of degree $d$ essentially if it has degree $d$ 
and if its multi-degree ${\ul d}=(\deg L|_{Z_1}, \dots, \deg L|_{Z_r})$ 
is ``not too far'' from 
$$\left(d\frac{\deg \omega_{|Z_1}}{\deg{\omega}}, 
\dots, d\frac{\deg \omega_{|Z_r}}{\deg{\omega}}\right),$$
where $Z_1,\dots,Z_r$ are the irreducible components of $C$
and $\omega$ is the canonical bundle of $C$.
In precise terms we impose
$$\left\lvert\deg L_{|Z}-d\frac{\deg \omega_{|Z}}{\deg{\omega_C}}\right\rvert<\frac{\#(Z\cap \ol{C\setminus Z})}{2}$$
for any proper subcurve $Z\subset C$.
Caporaso restricts the Picard functor of line bundles of total degree $d$ 
by imposing the above balancing condition: we get in this way 
 $\mathcal N_g^d\to \Mbar_g$.
It is a remarkable fact that the above inequality, originating  naturally in
Geometric Invariant Theory, under the hypotheses $(d-g+1,2g-2)=1, g\ge 3$, 
identifies a representable morphism  $\mathcal N_g^d\to \Mbar_g$  whose restriction on each  trait 
$\Spec R\to \Mbar_g$ transversal to the boundary  $\partial \Mbar_g=\Mbar_g\setminus \Mcal_g$
is the \ner  model of its generic fibre, the torsor $\Pic^d(C_K)$.

\begin{exa}
Consider a smooth curve $C_K$ 
of genus $3$ degenerating at the special point of 
$\Spec R$ to a curve $C_k$ with two smooth
components $C_1$ and $C_2$ of genus $1$ and two nodes. 
We can consider the above functor $\mathcal N_3^1$ which, over the special fibre, 
allows only multidegrees
$(0,1)$ and $(1,0)$. Indeed $\Pic^{(0,1)}C_k$ and 
$\Pic^{(1,0)}C_k$ are the two components of the special fibre of the N\'eron model of $\Pic^1C_K$.

In this example we can describe the scheme 
$\Pic^{(0,1)}C_k\sqcup \Pic^{(1,0)}C_k$ without ordering the two components with the indices
``1'' and ``2''; for this, we can use the expression ``balanced line bundles'', or 
we can unravel this notion in this case by saying that $\mathcal N_g^1\otimes R$ is the scheme parametrising line bundles of total degree-$1$
and nonnegative degree on all components. 
As we see in the next example, this can be done only because the hypotheses
$\gcd(d-g+1,2g-2)=1$ and $g\ge 3$ are satisfied.
\end{exa}

Over $\Mbar_{g,n}$ with $n\ge 1$ we 
can exploit the functor of balanced line bundle of total degree $d$ to form 
$\mathcal N_{g,n}^d$. Then, via 
the isomorphism $L\to L\otimes \Ocal(-d[x_1])$, where 
$x_1$ denotes the first marking, 
we provide a separable family whose restriction on every trait $\Spec R \to \Mbar_{g,n}$ 
transversal to the boundary 
locus $\partial \Mbar_{g,n}=\Mbar_{g,n}\setminus \Mcal_{g,n}$ 
is the N\'eron model of its generic fibre, the Jacobian $\Pic^{\zer}C_K$.

\begin{exa}
We examine again the previous example where the curve $C_K$, which we now assume  equipped with 
a distinguished marking $x_1$, 
 degenerates to the $2$-noded curve 
$C_1\cup C_2$, where we assume that $x_1$ lies on $C_1$. 
The isomorphism $\phi\colon L\mapsto L\otimes \Ocal(-[x_1])$ yields 
$\phi \mathcal N_3^1$ which, over the special fibre, consists of points paramtrizing 
line bundles of multidegrees
$(-1,1)$ and $(0,0)$. Indeed $\Pic^{(-1,1)}C$ and 
$\Pic^{(0,0)}C$ are the two components of the special fibre of the N\'eron model 
of $\Pic^{\zer}C_K$. 

Notice that this approach uses in a crucial way the marking $x_1$ 
which distinguishes one, privileged, irreducible component.
The scheme $\phi\mathcal N_{3,1}^1$ 
can never admit a group structure compatible with that of the Jacobian 
over $\Mcal_{g,n}$. 
\end{exa}

We now apply Corollary \ref{thm:global} and
 consider the functor $\Pical_g^{\zer,\ell}$, whose fibre over a point of 
$\partial \Mbar_{g}\upl=\Mbar_{g}\upl\setminus \Mcal_{g}$ 
 is a group of line bundles on an $\ell$-stable curve $\sta C$ 
 of degree zero on every 
 component. Furthermore this group is isomorphic to the special fibre 
 of N\'eron model of 
a smoothing of 
 the coarse space $C$ of $\sta C$.
Recall that $\ell$ should be a multiple of the exponent of any 
critical group of any genus-$3$ stable graph. 
In particular since the dual graph of $C_1\cup C_2$ is the $2$-cycle graph whose 
critical group is $\ZZ/2$, we should assume $\ell\in 2\ZZ$.

\begin{exa}
Consider again the smooth genus-$3$ curve $C_K$ 
degenerating at the special point of 
$\Spec R$ to a curve $C_k$ with two smooth genus-$1$
components $C_1$ and $C_2$. Over $R_\ell$ we consider the 
corresponding regular twisted model whose special fibre 
$\sta C$ has stabilisers of order $\ell$ at the two nodes 
$n$ and $n'$ locally described as 
$\{xy=0\}$ and $\{x'y'=0\}$.  
We can consider $\Pical_3^{\zer,\ell}$ which, over the special fibre, 
represents line bundles whose degree vanishes on each component
and such that $\gamma^*\sta L\cong \sta L,$ where $\gamma$ 
is the ghost automorphism $\gamma(x,y)=(\xi_\ell x, y)$
and $\gamma(x',y')=(\xi_\ell x', y')$.

We get a new description of the two components forming the special fibre of the 
N\'eron model of the Jacobian of $C_K$. 
One component parametrises line bundles on $\sta C$
arising as pull-backs from the coarse space $C$; namely, locally at the node, 
the $\pmmu_\ell$-action is $$\frac1\ell(1,\ell-1,0)$$ where 
the entries $1$ and $\ell-1$ describe the action on the curve and the third entry 
describes the trivial action on the fibre of the line bundle $\sta L$.
The second component parametrizes line bundles which still have degree $0$ on both 
components ad whose $\pmmu_\ell$-action on each node is given by 
$$\frac1\ell(1,\ell-1,\frac\ell2).$$ 
As observed in Remark \ref{rem:otherell}, when we are only interested in a local 
picture over $R$, we can choose $\ell=2$.
The advantage of the approach via $\Pical_{g}^\ell$ is the
that we do not need to privilege one component among $C_1$ and $C_2$. 
\end{exa}

We can now place the attention on a  universal 
deformation of $C_1\cup C_2$. 
The deformation space $U$ contains a normal 
crossing divisor with two irreducible components
$D'=(u=0)$ and $D''=(v=0)$ 
parametrising the deformation along which $n_2$ is smoothed and $n_1$ persists 
and the deformation along which $n_1$ is smoothed and $n_2$ persists. 
Caporaso's functor of balanced Picard groups allows us to patch together 
the \ner models along each trait $u=\la v$. 

We can also apply Theorem \ref{thm:local2}. The scheme $\phi 
\mathcal N_3^1$ cannot be equipped with a group structure. 
On the other hand, when we consider $\wt{U}$ the spectrum of the ring 
obtained by extracting a $\ell$th roots $\wt u$ and $\wt v$
from the parameters $u$ and $v$, 
we can consider a map $\wt{U}$ to $\Mbar_g\upl$ and 
a family $\sta C$ of $\ell$th twisted curves over $\wt{U}$ whose coarse space descends 
to a family $C$ of stable curves over $U$. There we can 
consider the functor $\Pic^{\zer,\ell}$ and its restriction  
$\Pic^{\zer,\ell}_{\wt U}$ on the scheme $\wt U$. 
We get a group scheme  whose restriction 
on each trait $\Spec \wt R\to \wt{U}$ mapping $\pi\mapsto (\pi,\la \pi)$
with $\la \neq 0$ is the pullback of the \ner model of the Jacobian of 
the smooth curve $\Spec( K^{\pmmu_\ell}) \times_U C$. 

\bigskip 

One can now address a more ambitious question: namely, can we provide a group scheme 
over a universal deformation of $C_1\cup C_2$
containing the \ner models of any Jacobian $\Jac C_K$ for any injective 
trait $\Spec R \to U$ 
whose generic point maps into the open locus of $U\setminus (D\cup D')$?
It is not difficult to see that such a group scheme cannot be of finite type; 
indeed we can approach the special point 
$u=v=0$ by mapping $\pi$ to $(\pi^n, \la \pi^m)$ with $\la\neq 0$ $\gcd(n,m)=1$ and 
the special fibre of the \ner model of the 
Jacobian of the generic curve is a group scheme with $n+m$ components. 
The work of Holmes \cite{Holmes} provides a positive answer by modifying $U$ as follows. 
\begin{exa}\label{exa:Holmes}
Let $U_0$ be $U$, let $D_0$ be $D\cup D'$, and let $B_0$ be $\Sing(D_0)=\{u=0,v=0\}$. Then,
we set $U_1=\mathrm{Bl}_{B_0}(U_0)$ mapping via $f_1$ to $U_0$, 
$D_1=f^*D_0$, and $B_1=\Sing D_1$. We iterate by  blowing up $B_1$ within $U_1$, \emph{etc}.
We now consider the co-limit $V$ induced by $V_0=U_0\setminus B_0\hookrightarrow 
V_1=U_1\setminus B_1\hookrightarrow V_2=U_2\setminus B_2\hookrightarrow \dots$
with a divisor induced by the co-limit $D_0\hookrightarrow D_1\hookrightarrow D_2\hookrightarrow
\dots$
Any trait $\Spec R\hookrightarrow 
U$ mapping the generic point to $U_0\setminus D_0$ naturally lifts to $V$ and 
Holmes provides a group scheme $N_V$ over $V$ such that the 
\ner model of $\Jac(C_U\otimes K)=N_V\otimes K$ is $N_V\otimes R$.
\end{exa}

This construction is all the more satisfactory because 
it arises as the special fibre over the deformation space $U$ of 
a global 
group scheme $\wt {\mathcal N}_g$, which is a model of the 
the universal 
Jacobian $\Pic^{\zer}$ over $\Mcal_g$ and satisfy an analogue of the
\ner property over a birational non-proper 
modification $\wt \Mcal_g\to \Mbar_g$ restricting to an isomorphism on $\Mcal_g$.
Indeed in \cite{Holmes}, Holmes produces $\wt{\mathcal N}_g$ as the terminal 
object in a category of ``\ner admitting morphisms'', see Defn.~1.1 and 1.2 in \cite{Holmes}. 

In the above construction both $\wt {\mathcal N}_g$ and the base $\wt {\mathcal M}_g$ 
are not of finite type. We point out that Olsson's stack $\mathfrak M_g^\ttt$ 
of all twisted curves (see \cite{Olsson}) can be also equipped with a group scheme $\mathfrak{Pic}_g^{\zer,\ttt}$ 
parametrising line bundles of degree zero on all components.
It should be noticed that $\mathfrak M_g^\ttt$ is not of finite type nor separated
(the stack of $\ell$-stable curves was introduced precisely in order to identify a 
proper substack of $\mathfrak M_g^\ttt$). 
 Reformulating 
\ref{thm:local2}, we can 
obtain \ner models for any trait as follows. 
%

\begin{cor}\label{cor:anothertake}
 Consider a stable curve $C_R$ over a discrete valuation ring $R$ 
 whose fibre over $K$ is the smooth curve $C_K$ and whose 
 special fibre $C_k$ has thicknesses $\bf t$,
 dual graph $\Gamma_k$ 
 and $\bf t$-critical group $\mathcal K_{\bf t}(\Gamma_k)$.
 
For any positive integer  $\ell$, we 
consider the regular twisted model $\sta C^\ttt$ over $R$ 
and the corresponding morphism $\Spec R_\ell \to \mathfrak M^\ttt_g$.  
The $\Pic^{\zer}$ functor is a group scheme over $\mathfrak M^{\ttt}_g$ and 
the pullback over $R_\ell$ is a $\pmmu_\ell$-equivariant morphism $\Pic^{\zer}_{R_\ell}
\to \Spec R_\ell$ whose special fibre splits into the disjoint union of a 
$\pmmu_\ell$-fixed
part $F_\ell$ and a $\pmmu_\ell$-moving
part   $M_\ell$.
 
Then, as soon as $\ell$ is a multiple of the exponent of the $\bf t$-critical group 
$\mathcal K_{\bf t}(\Gamma_k)$, 
the \ner model of the Jacobian $\Pic_K$
of the generic fibre coincides, after pullback to $R_\ell$, with the complement 
of the moving part $M_\ell$ of the 
base change of $\mathfrak{Pic}^{\zer,\ttt}_g\to \mathfrak{M}_g^\ttt$ via 
$\Spec R_\ell\to \mathfrak M_{g}^\ttt$.
Summarising, we have the following fibre diagrams
$$
\xymatrix @!0 @R=1cm @C=1.7cm {
 \Pic_K\ar[rr]\ar[dd]    && 
           N(\Pic_K)\ar[dd]  && 
                \Pic^{\zer,\ell}_{R_\ell}=\Pic^{\zer}_{R_\ell}\setminus M_\ell
                \ar[dd] \ar[ll]\ar[rr]^{\ \ \ \ \ \ \ \subseteq} && 
               \Pic^{\zer}_{R_\ell}\ar[dd] \ar[rr] &&
                       \mathfrak{Pic}^{\zer,\ttt}_{g}\ar[dd] 
        \\
                             &\square&    &\square&    && &\square&  \\
        \Spec K \ar[rr]  && 
           \Spec R && 
              \Spec R_\ell \ar[ll]&& \Spec R_\ell 
              \ar[rr]&&
                   \mathfrak M_g^{\ttt}.
        }
$$\qed
\end{cor}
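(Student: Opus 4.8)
The plan is to read this corollary as nothing more than a transcription of Theorem~\ref{thm:local2} (equivalently Theorem~\ref{thm:local1}) into the setting of Olsson's stack $\mathfrak M_g^\ttt$ in place of the proper stack $\Mbar_g\upl$. Since $\mathfrak M_g^\ttt$ is neither separated nor of finite type, the whole point is that every assertion I need is \emph{trait-local} along $\Spec R_\ell$, so I would never argue globally over $\mathfrak M_g^\ttt$ but instead reduce each claim to the already-proven local statement. First I would dispose of the opening assertion that $\Pic^{\zer}$ is a group scheme over $\mathfrak M_g^\ttt$: by the representability results recalled in \S\ref{sect:setup} the relative Picard functor of the universal twisted curve is represented by a group scheme, and imposing degree zero on every irreducible component of every fibre carves out $\mathfrak{Pic}^{\zer,\ttt}_g$; separatedness, which is a trait-local property, then follows from Proposition~\ref{pro:separated} applied to the regular twisted model.

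Next I would pin down the morphism $\Spec R_\ell\to\mathfrak M_g^\ttt$: it is the one classifying the regular twisted model $\sta C^\ttt(\ell)$ of $C_{R_\ell}$, whose stabilisers at the nodes have order $\ell t_e$ (this is \emph{not} the pullback of $\sta C^\ttt$, as emphasised in the Remark following the definition of the regular twisted model). Hence the base change of $\mathfrak{Pic}^{\zer,\ttt}_g$ along this morphism is precisely $\Pic^{\zer}_{R_\ell}=\Pic^{\zer}(\sta C^\ttt(\ell)/R_\ell)$, which realises the rightmost cartesian square of the asserted diagram. The $\pmmu_\ell=\Gal(K_\ell/K)$-action on $\sta C^\ttt(\ell)$ constructed in \S\ref{sect:ghosto} is equivariant over $\Spec R_\ell$ and descends to the promised $\pmmu_\ell$-action on $\Pic^{\zer}_{R_\ell}$. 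The splitting of the special fibre is then a direct quotation of \S\ref{sect:action}: by Proposition~\ref{pro:pic0decomp} it decomposes as $\bigsqcup_{{\bf a}\in\ker\partial_{\ell{\bf t}}}\Pic^{\zer}_{\bf a}$, and by the Proposition of \S\ref{sect:action} the component indexed by ${\bf a}$ is $\pmmu_\ell$-fixed exactly when ${\bf a}(\zeta)\in\im\delta_{\GG_m}$ for every $\zeta\in\pmmu_\ell$, no point of the remaining components being fixed. I would then simply \emph{define} $F_\ell$ to be the union of the fixed components, those indexed by $\ker\partial_{\ell{\bf t}}\cap\im\delta_{\GG_m}$, and $M_\ell$ the union of the moving ones; these being open-and-closed in the special fibre, deleting $M_\ell$ from the total space produces the open subgroup scheme $\Pic^{\zer,\ell}_{R_\ell}=\Pic^{\zer}_{R_\ell}\setminus M_\ell$ of Definition~\ref{defn:gal-invariant}, its subgroup structure being the content of the Remark following that definition.

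Finally, assuming $\ell$ is a multiple of the exponent of $\mathcal K_{\bf t}(\Gamma_k)$, I would invoke Theorem~\ref{thm:local1}, which states that $\Pic^{\zer,\ell}_{R_\ell}$ descends to $\Pic^{\zer,\ell}_R$ over $R$ with $\nerosta(\Pic^{\zer}_K)=\Pic^{\zer,\ell}_R$. Pulling this identification back to $R_\ell$ yields $N(\Pic_K)\otimes_R R_\ell=\Pic^{\zer,\ell}_{R_\ell}=\Pic^{\zer}_{R_\ell}\setminus M_\ell$, which is exactly the two left-hand cartesian squares (descent along $\Spec R_\ell\to\Spec R$); concatenating with the rightmost square of the previous step assembles the full fibre diagram.

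I expect the only genuinely delicate point to be the bookkeeping of the base change across $\mathfrak M_g^\ttt$. Because this stack is not of finite type and not separated, none of the global finiteness arguments of \S\ref{sect:Nermod} are available, so the hard part will be verifying that both ``removing $M_\ell$'' and the identification $\Pic^{\zer}_{R_\ell}=\mathfrak{Pic}^{\zer,\ttt}_g\times_{\mathfrak M_g^\ttt}\Spec R_\ell$ are purely trait-local operations, insulated from the pathologies of the ambient stack. Everything else is a faithful translation of Proposition~\ref{pro:separated}, Proposition~\ref{pro:pic0decomp}, the Galois-action Proposition of \S\ref{sect:action}, and Theorem~\ref{thm:local1}.
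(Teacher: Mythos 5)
Your proposal is correct and follows essentially the same route as the paper: the paper offers no separate proof for this corollary, presenting it (with the \qed in its statement) as a direct reformulation of Theorems \ref{thm:local1}--\ref{thm:local2} over Olsson's stack, and your chain of reductions --- representability of $\Pic^{\zer}$ over $\mathfrak M_g^\ttt$, identification of the pullback along $\Spec R_\ell\to\mathfrak M_g^\ttt$ with $\Pic^{\zer}(\sta C^\ttt(\ell)/R_\ell)$, the fixed/moving splitting of the special fibre from Proposition \ref{pro:pic0decomp} and \S\ref{sect:action}, Definition \ref{defn:gal-invariant}, and Theorem \ref{thm:local1} for the descent to $R$ and the N\'eron property --- is precisely that reformulation. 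In particular, you correctly read ``the corresponding morphism $\Spec R_\ell \to \mathfrak M_g^\ttt$'' as classifying the regular twisted model $\sta C^\ttt(\ell)$ of $C_{R_\ell}$ (stabilisers of order $\ell t_e$), not the pullback of $\sta C^\ttt$, which is the reading the corollary's $\pmmu_\ell$-action and the application of Theorem \ref{thm:local1} require.
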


\begin{rem}
The statement above does not hold as soon as the index $\ell$  is not a multiple of 
the exponent of $\mathcal K_{\bf t}(\Gamma_k)$. 
Furthermore we point out that \eqref{eq:ordcrit} provides an explicit formula for an index 
$\ell$ for which the statement holds --- a 
multiple of the exponent of the $\bf t$-critical group. 
\end{rem}
%
%
%

\vspace{.5cm}

{\tiny
\noindent{Institut de Math\'ematiques de Jussieu, 
UMR 7586, CNRS, UPMC, case 247, 4 Place Jussieu, 75252 Paris cedex 5, France\\
\textit{E-mail address:} \url{Alessandro.Chiodo@imj-prg.fr}}
}
\vspace{.3cm}
\end{document}